\newcommand{\bs}[1]{\boldsymbol{#1}}
\definecolor{iblue}{rgb}{0.1,0,0.75}
\definecolor{ired}{rgb}{0.9,0,0.1}
\newcommand{\norm}[1]{|{#1}|}
\def\R{\mathbb{R}}
\def\Z{\mathbb{Z}}
\def\N{\mathbb{N}}
\def\P{\mathbb{P}}
\def\X{\mathbb{X}}
\def\d{\mathrm{d}}
\def\ua{\uparrow}
\newcommand{\Po}{\mathrm{Po}}
\newcommand{\E}{\mathbb{E}}
\newcommand{\ddr}{\mathrm{d}}
\newcommand{\edr}{\mathrm{e}}
\newcommand{\MLud}{M_{L}} 
\newcommand{\MLc}{\tilde M_{L}} 
\newcommand{\Mud}{M} 
\newcommand{\Ltrans}{p_{L}}
\newcommand{\bdtrans}{p}
\newcommand{\Lstat}{\pi_{L}}
\newcommand{\Indic}{\mathds{1}}
\newtheorem{theorem}{Theorem}[section]
\newtheorem{proposition}[theorem]{Proposition} 
\newtheorem{lemma}[theorem]{Lemma} 
\newtheorem{definition1}[theorem]{Definition} 
\newtheorem{remark1}[theorem]{Remark} 
\long\def\symbolfootnote[#1]#2{\begingroup\def\thefootnote{\hspace*{-1mm}\fnsymbol{footnote}}\footnote[#1]{#2}\endgroup}
\title{\bf \vspace{-2.5cm}
Birth-and-death P\'olya urns and\\ stationary random partitions
\\[5mm]
}
\author{
\textsc{Pierpaolo De Blasi}\\[1mm]
\emph{University of Torino and Collegio Carlo Alberto}\\[3mm]
\textsc{Matteo Ruggiero\footnote{Email: matteo.ruggiero@unito.it}}\\[1mm]
\emph{University of Torino and Collegio Carlo Alberto}\\[3mm]
\textsc{Stephen G. Walker}\\[1mm]
\emph{University of Texas at Austin}
}
\date{\today}
\begin{document}
\thispagestyle{empty}

\maketitle

\begin{center}
\begin{minipage}{.75\textwidth}
\footnotesize\noindent
We introduce a class of birth-and-death P\'olya urns, which allow for both sampling and removal of observations governed by an auxiliary inhomogeneous Bernoulli process, and investigate the asymptotic behaviour of the induced allelic partitions. By exploiting some embedded models, we show that the asymptotic regimes exhibit a phase transition from partitions with almost surely infinitely many blocks and independent counts, to stationary partitions with a random number of blocks. The first regime corresponds to limits of Ewens-type partitions and includes a result of \citet{ABT92} as a special case. 
We identify the invariant {and reversible} measure in the second regime, which preserves asymptotically the dependence between counts,
and is shown to be a mixture of Ewens sampling formulas, with a tilted Negative Binomial mixing distribution on the sample size.
\\[-2mm]

\textbf{Keywords}:
Ewens sampling formula,
phase transition, 
allelic partition,
mixture model,
particle process,
sampling.\\[-2mm]

\textbf{MSC} Primary:
60G10, 
60J10. 
Secondary:
92D25, 
60J80,  
60K35. 
\end{minipage}
\end{center}
\linespread{1.2}


\section{Introduction and outline of the results}\label{subsec: motivation}

P\'olya urn schemes provide easily interpretable exchangeable sequences and are among the most celebrated sampling rules in probability. See \citet{JK77} and \citet{M09} for general treatments. Of particular interest for our purposes is the Blackwell--MacQueen P\'olya urn (\citealp{BM73}): given $\lambda>0$ and a nonatomic probability measure $P_{0}$ on a Polish space $\X$, a sequence sampled from a P\'olya urn is such that $X_{1}\sim P_{0}$ and for $n\ge1$
\begin{equation}\label{Polya urn predictive}
X_{n+1}\mid X_{1},\ldots,X_{n}\sim \frac{\lambda}{\lambda+n}P_{0}
  +\frac{1}{\lambda+n}\sum_{i=1}^{n}\delta_{X_{i}},
\end{equation} 
where $\delta_{x}$ denotes a point mass at $x$. Since $P_{0}$ has no atoms, if $X_{n+1}$ is sampled from $P_{0}$ a new value is observed, otherwise $X_{n+1}$ is a copy of a previous observation. Hence a P\'olya urn sample will feature ties with positive probability, inducing a partition of the observed values. A popular interpretation of the above scheme is as a species sampling model (\citealp{P96}), whereby the observations label species sampled from a large population, and those drawn from $P_{0}$ are species that have not been previously observed. 

The impact of the Blackwell--MacQueen P\'olya urn schemes and its developments has been extremely significant in applied probability and statistics, particularly through the construction and characterisation of random probability measures via limits of exchangeable sequences \citep{BM73,P95,P96,P06,GP05,LMP05,LMP07}, and as a building block in the architecture of computational strategies for posterior inference with Bayesian nonparametric mixture models \citep{EW95,MM98,N00,IJ01}. 


In these respects, particularly relevant is its relationship with the Ewens sampling formula, which assigns probability
\begin{equation}\label{ESF}
\text{ESF}_{n}(m_1,\ldots,m_n)
  =\frac{n!\Gamma(\lambda)}{\Gamma(\lambda+n)}
  \prod_{i=1}^n\left( \frac{\lambda}{i} \right)^{m_i}
  \frac{1}{m_i!}
{\Indic\Big\{ \sum\nolimits_{j=1}^n jm_j=n \Big\}},
\end{equation}   
to vectors $m=(m_{1},\ldots,m_{n})\in \Z_{+}^{n}$, 
{where $\Indic(\,\cdot\,)$ is the indicator function}.
Originally introduced for describing the sampling distribution of allelic frequencies in a neutral population at equilibrium (\citealp{E72}), this provides the law of the ``allelic'' partition $(m_1,\ldots,m_n)$ induced by a P\'olya urn sample of size $n$, where $m_{i}$ is the number of alleles appearing exactly $i$ times. Equivalently, it provides the law of the partition induced by sampling from a Dirichlet process random probability measure (\citealp{A74}).
See \citet{C16} (with discussion) for a recent review of applications and connections of the Ewens sampling formula. 

In this paper we consider a class of birth-and-death P\'olya urns (B\&D-PUs for short), which in addition to adding observations according to \eqref{Polya urn predictive} allow to remove observations from the current sample, and investigate their asymptotic regimes under a certain specification of the probability of 
{a removal step}.
Rather than extending the predictive distribution in \eqref{Polya urn predictive}, we define these directly in terms of the dynamics induced on the associated allelic partition. For any $\beta\in(0,1]$, define a B\&D-PU 
as the Markov chain $\Mud=\{\Mud(h),h\in\Z_{+}\}$ with state space $\Z_{+}^{\infty}$ and
transition probabilities
\begin{equation}\label{BD-PU transitions}
  \bdtrans(m'|m)\propto
  \begin{cases}
 \beta\lambda, \quad   & m'=m+e_{1},\\
\beta im_{i},
  & m'=m-e_{i}+e_{i+1},\quad i\ge1  \\
(1-\beta) im_{i},   & m'=m-e_{i}+e_{i-1},\quad i\ge1
  \end{cases}
\end{equation} 
where $\propto$ denotes proportionality and
\begin{equation*}
  m\pm e_{i}
  =(m_{1},\ldots,m_{i-1},m_{i}\pm 1,m_{i+1},\ldots),
  \quad\quad m\pm e_{0}=m.
\end{equation*}  
{Note that the normalising constant in \eqref{BD-PU transitions} is $\beta\lambda+\sum_{i\ge1}im_{i}$}. Without loss of generality, we let for convenience $\Mud$ and other auxiliary chains introduced later start from the origin $(0,0,\ldots)$, instead of assigning them an initial distribution. 
The transitions in \eqref{BD-PU transitions} correspond respectively  to the introduction of a new block of size one; to a block of size $i$ becoming of size $i+1$; and to a block of size $i$ becoming of size $i-1$. Here $im_i$ is the total number of items in blocks of size $i$. The dynamics of $\Mud$, 
restricted to its first two coordinates, are depicted on a lattice in Figure \ref{fig: graph}. 
Equivalently, the above transitions can be expressed in terms of the underlying process of observations, whereby with probability 
\begin{equation}\label{bernoulli parameter}
  b(\beta,m)=\frac{\beta(\lambda+\sum_{i\geq1}im_i)}
  {\beta\lambda+\sum_{i\geq 1}im_i}
\end{equation}  
a further observation is drawn from \eqref{Polya urn predictive} and added to the sample, and with probability $1-b(\beta,m)$ an observation is chosen uniformly from the current sample and removed. 
Setting $\beta=1$ above reduces \eqref{BD-PU transitions} to the usual dynamics induced on partitions by the P\'olya urn \eqref{Polya urn predictive} (cf., e.g., \citet{F10}, Section 2.7.2), whereas $\beta=0$ would simply remove sequentially all items currently available until none is left, hence it is not considered here. Note that the random allelic partitions induced by P\'olya urns are consistent under uniform deletion, i.e., the partition obtained by removing a uniformly chosen item from an $\text{ESF}_{n}$-distributed partition of $n$ elements has distribution $\text{ESF}_{n-1}$; cf., e.g.,  \citet{C16}. Hence perturbing the P\'olya urn dynamics by a finite number of uniform removals does not effect its limiting behaviour. Here, however, we are allowing for an infinite number of removals according to an auxiliary inhomogeneous Bernoulli process with state-dependent probability $1-b(\beta,m)$, and study the implied long run behaviour. 

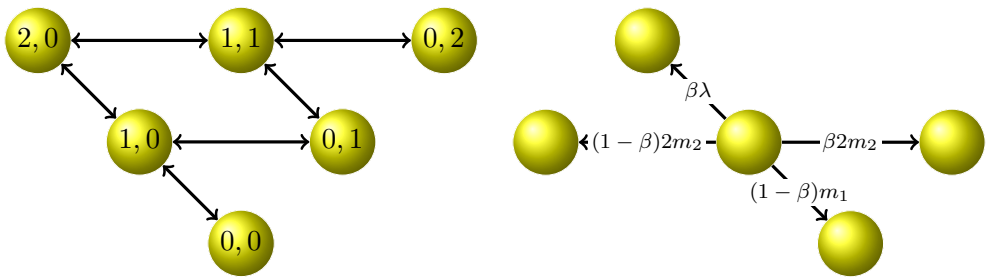
\begin{figure}[t!]
\begin{center}
\begin{tikzpicture}[scale=1.35]
 \SetGraphUnit{1}
 \GraphInit[vstyle=Shade]
 \SetGraphShadeColor{yellow}{black}{yellow}
 \tikzset{LabelStyle/.style= {draw,
                              fill  = white,
                              text  = black}}
\tikzset{VertexStyle/.style = {shape = circle,
                                shading = ball,
                                ball color = yellow,
                                minimum size = 25pt,
                                inner sep = 2.5pt,
                                draw,
			       color=white}}
\SetGraphUnit{1}
\Vertex[L=\text{\textcolor{black}{$0,0$}},x=0,y=0]{00}
\Vertex[L=\text{\textcolor{black}{$1,0$}},x=-1,y=1]{10}
\Vertex[L=\text{\textcolor{black}{$2,0$}},x=-2,y=2]{20}
\Vertex[L=\text{\textcolor{black}{$0,1$}},x=1,y=1]{01}
\Vertex[L=\text{\textcolor{black}{$0,2$}},x=2,y=2]{02}
\Vertex[L=\text{\textcolor{black}{$1,1$}},x=0,y=2]{11}
\tikzset{EdgeStyle/.style = {<->, very thick}}
\Edge(00)(10)
\Edge(10)(20)
\Edge(10)(01)
\Edge(20)(11)
\Edge(11)(02)
\Edge(01)(11)
\tikzset{VertexStyle/.style = {}}
 \end{tikzpicture}
 \hspace{2mm}
 \begin{tikzpicture}[scale=1.35]
 \SetGraphUnit{2}
 \GraphInit[vstyle=Shade]
 \SetGraphShadeColor{yellow}{red}{yellow}
 \tikzset{LabelStyle/.style= {
                              fill  = white,
 				inner sep = 1pt,
                              text  = black}}
\tikzset{VertexStyle/.style = {shape = circle,
                                shading = ball,
                                ball color = yellow,
                                minimum size = 25pt,
                                inner sep = 2.5pt,
                                draw,
			       color=white}}
\SetGraphUnit{1}
\Vertex[L=\text{},x=0,y=0]{0}
\Vertex[L=\text{},x=-1,y=1]{1}
\Vertex[L=\text{},x=2,y=0]{3}
\Vertex[L=\text{},x=-2,y=0]{4}
\Vertex[L=\text{},x=1,y=-1]{6}
\tikzset{EdgeStyle/.style = {->, very thick}}
\Edge[label=\scriptsize$\beta\lambda$](0)(1)
\Edge[label=\scriptsize$\beta 2m_{2}$](0)(3)
\Edge[label=\scriptsize$(1-\beta)2m_{2}$](0)(4)
\Edge[label=\scriptsize$(1-\beta)m_{1}$](0)(6)

\tikzset{VertexStyle/.style = {}}
 \end{tikzpicture}
\begin{minipage}{.75\textwidth}\vspace{4mm}
\caption{\footnotesize Graph representation of the partition-valued process induced by B\&D-PUs restricted to $(m_1,m_2)$
(left), and probabilities of admissible transitions, up to proportionality (right).
}\label{fig: graph}
\end{minipage}
\end{center}
\end{figure}

We show that B\&D-PUs exhibit asymptotically a phase transition at $\beta=1/2$ from stationary dynamics with a random number of blocks $k=\sum_i m_i$, to nonstationary dynamics which yield almost surely infinitely-many blocks. 
To this aim, we first study the stationary properties of an auxiliary system  of finite-dimensional Markov chains on the space of partitions. These differ from other commonly used partition-valued processes, e.g., those in \citet{P09}, which are indexed by a fixed sample size, or those in \citet{C14}, which are indexed by the maximum number of blocks. Our auxiliary processes are instead indexed by a maximal allelic count, i.e., the maximum number of items allowed in each block, whereas both the number of blocks and the sample size are left free to vary. 
The ergodicity of these chains is then exploited by embedding them, at certain stopping times, in B\&D-PUs, whose asymptotic distributions coincide with the weak limits of the stationary laws of the auxiliary chains. 
Specifically, values of $\beta\in [1/2,1]$ for B\&D-PUs in the long run generate infinite structures analogous to those induced by limits of Ewens partitions. In this limit, the allelic counts $(m_1,m_2,\ldots)$ are asymptotically independent with Poisson distribution of mean $\lambda/i$, irrespective of the value of $\beta$. This result for $\beta=1$ was first proved by \citet{ABT92}.
Values of $\beta\in (0,1/2)$ are instead shown to generate stationary models on $\Z_{+}^{\infty}$ with {invariant and reversible} distribution 
\begin{equation}\label{pi in intro}
\pi(m)=  \frac{\beta\lambda+\sum_{i\ge1}im_{i}}
  {\beta\lambda
  +\beta\lambda/(1-2\beta)}
  \prod_{i\ge1}\text{Po}\left(m_{i};
  \theta_i/i\right), \quad
   \quad 
   \theta_i=\lambda\bigg(\frac{\beta}{1-\beta}\bigg)^{i},
\end{equation} 
where $\text{Po}(\cdot;\theta)$ is a Poisson probability mass function with mean $\theta$. 
Here several elements are of interest. The first is the dependence between the allelic counts $m_i$, which, contrary to the $\beta\geq 1/2$ case, is preserved in the limit. An interpretation for this dependence can be given by taking $m_i$ be independent Poisson variables of mean $\theta_i/i$, with $\theta_{i}$ as in \eqref{pi in intro}, and letting $J=j$ with probability proportional to $\theta_{j}$ for $j\ge1$ or to $\beta\lambda$ for $j=0$. Then the vector $(m_{1},\ldots,m_{j-1},m_{j}+1,m_{j+1},\ldots,)$ has distribution \eqref{pi in intro}. 
The second is the fact that the expected number of items $\E(im_{i})$ in groups of size $i$ can be easily checked to be proportional to $\theta_{i}$ and thus depends on $i$, whereas in the Ewens case $\E(im_{i})=\lambda$ in the limit. 
Recently, \citet{BU10,BUV11} studied the asymptotic behaviour of generalised sampling formulas where the counts distribution also depends on the count index $i$, obtained by replacing $\lambda$ in \eqref{ESF} with $\lambda_i$ for a sequence $(\lambda_1,\lambda_2,\ldots)$ of nonnegative reals. 
A third element of interest is the fact that the number of groups at stationarity is random and finite, with 
{distribution determined by $(\beta,\lambda)$; see \eqref{moments} below.} \citet{G10} studied a partition structure generated sequentially which yields a finite, random number of blocks.
Finally, in this stationary regime, the underlying system of particles is also stationary with invariant measure related to a mixture of P\'olya urn schemes, with a tilted Negative Binomial mixing distribution on the sample size. The latter is defined here as the total number of items in the system at a given time, i.e., $n:=\sum_{i\ge1}im_{i}$.

The paper is organised as follows. 
Section \ref{sec: up and down chains} defines the auxiliary system of Markov chains with maximal allelic counts and identifies their invariant measure. Section \ref{sec: delayed urn} proves the phase transition for the partition structures generated asymptotically by B\&D-PUs, identifies their limiting distributions and shows the reversibility for $\beta\in(0,1/2)$. Finally, Section \ref{sec: mix of ESFs} highlights a connection of our results with a mixture of Ewens sampling formulas.


\section{Chains with maximal allelic count}
\label{sec: up and down chains}

In this Section we define and study a system of finite-dimensional partition-valued Markov chains, which are instrumental for the investigation of the B\&D-PUs asymptotic regimes.
Fix $L\in \N$ and $\mu>0$, and define $\MLud=\{\MLud(h),h\in\Z_{+}\}$ to be the $\Z_{+}^{L}$-valued Markov chain with transition probabilities 
\begin{equation}\label{up-down transitions explicit}
  \Ltrans(m'|m)\propto
  \begin{cases}
\beta\lambda, \quad 
  & m'=m+e_{1},  \\
\beta im_{i}, 
  & m'=m-e_{i}+e_{i+1},\quad i=1,\ldots,L-1,  \\
\beta L m_{L}, 
  & m'=m-e_{L},  \\
(1-\beta) im_{i}, 
  & m'=m-e_{i}+e_{i-1},\quad i=1,\ldots,L,\\
(1-\beta)\mu, 
  & m'=m+e_{L}.
  \end{cases}
\end{equation} 
with normalising constant $\beta\lambda+(1-\beta)\mu+\norm{m}$, for 
$\norm{m}=\sum_{i\geq 1}im_{i}$. 
Here the difference with respect to \eqref{BD-PU transitions} is that the system has a maximal allelic count $m_L$, whereby a group of size $L$ which becomes of size $L+1$ is removed from the system, with probability proportional to $\beta Lm_{L}$, and groups of size $L$ can be inserted in the system, with probability proportional to $(1-\beta)\mu$. 

The following result identifies the invariant measure of $\MLud$.

\begin{theorem}\label{thm: up-down chain}
$\MLud$ with transition probabilities \eqref{up-down transitions explicit} has unique invariant distribution
\begin{align}\label{invariant up-down}
  \Lstat(m)
  =\frac{\beta\lambda+(1-\beta)\mu+\norm{m}}
  {\beta\lambda+(1-\beta)\mu+\sum_{i=1}^{L}\theta_{i}}
  \prod_{i=1}^{L}\Po(m_{i};\theta_{i}/i),
\end{align}
where 
\begin{equation}\label{eq: mixed thetas implicit}
  \theta_{i}=w_{i}(\beta)\lambda+(1-w_{i}(\beta))\mu,
  \quad \quad 
  w_i(\beta)=
  \begin{cases}
  \displaystyle
    \beta^i\frac{(1-\beta)^{L-i+1}-\beta^{L-i+1}}
  {(1-\beta)^{L+1}-\beta^{L+1}},
  &\beta\neq 1/2,\\[3mm]
  \displaystyle
  \frac{L-i+1}{L+1},
  &\beta=1/2.
  \end{cases}  
\end{equation} 
\end{theorem}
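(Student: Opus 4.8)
The plan is to verify that $\Lstat$ in \eqref{invariant up-down} solves the global balance equations for $\Ltrans$ and then to upgrade this to uniqueness by an irreducibility argument. Two preliminary reductions make the computation tractable. First, the chain is irreducible on $\Z_+^L$: from any state one reaches the origin by repeatedly shrinking each block to size one and deleting it, and from the origin one reaches an arbitrary $m$ by repeatedly inserting size-one blocks and growing them; hence once an invariant probability measure is exhibited, it is automatically the unique one and the chain is positive recurrent. Second, writing $c(m)=\beta\lambda+(1-\beta)\mu+\norm{m}$ for the (finite) normalising constant of the transition probabilities and $g(m)=\prod_{i=1}^{L}\Po(m_{i};\theta_{i}/i)$, one has $\Lstat(m)=c(m)g(m)/Z$; since $g$ is a product-of-Poissons probability mass function, $\sum_{m}g(m)=1$ and $\sum_{m}\norm{m}g(m)=\sum_{i=1}^{L}i\cdot(\theta_{i}/i)=\sum_{i=1}^{L}\theta_{i}$, so $Z=\beta\lambda+(1-\beta)\mu+\sum_{i=1}^{L}\theta_{i}$, matching the denominator in \eqref{invariant up-down}. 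In particular $\Lstat$ is a genuine probability distribution, whatever the values of $\theta_{1},\dots,\theta_{L}$.

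Next I would reduce the balance equations to a recursion on the $\theta_{i}$. Because the total unnormalised outflow rate from $m$ equals exactly $c(m)$, the balance equation at a state $m'$ reads $c(m')g(m')=\sum_{m}g(m)\,q(m\to m')$, where $q(m\to m')$ runs over the unnormalised rates in \eqref{up-down transitions explicit}. The states $m$ that can reach $m'$ in one step form (at most) five families, obtained by reading the five rows of \eqref{up-down transitions explicit} backwards: $m=m'-e_{1}$; $m=m'+e_{i}-e_{i+1}$ for $i=1,\dots,L-1$; $m=m'+e_{L}$; $m=m'+e_{i}-e_{i-1}$ for $i=1,\dots,L$ (with $e_{0}=0$, so $i=1$ is $m=m'+e_{1}$, a block deletion); and $m=m'-e_{L}$. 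For each family I would evaluate $g(m)/g(m')$ using the elementary Poisson ratios $\Po(k-1;\nu)/\Po(k;\nu)=k/\nu$ and $\Po(k+1;\nu)/\Po(k;\nu)=\nu/(k+1)$, multiply by the corresponding rate $q(m\to m')$, and note that the factors $(m'_{i}+1)^{-1}$ coming from the Poisson ratios cancel against the linear rates $\beta i(m'_{i}+1)$, $(1-\beta)i(m'_{i}+1)$, so that each term becomes affine in $m'$; moreover any predecessor that fails to lie in $\Z_+^{L}$ because some coordinate $m'_{j}=0$ is exactly one whose contribution carries the vanishing factor $m'_{j}$, so no case split is needed. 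Matching the coefficients of $1,m'_{1},\dots,m'_{L}$ on the two sides, the balance equation at every $m'$ turns out to be equivalent to the single three-term linear recursion
\begin{equation*}
  \theta_{k}=\beta\theta_{k-1}+(1-\beta)\theta_{k+1},\qquad k=1,\dots,L,
\end{equation*}
subject to the boundary conventions $\theta_{0}:=\lambda$ and $\theta_{L+1}:=\mu$; the leftover ``constant-term'' identity $\beta\theta_{L}+(1-\beta)\theta_{1}=\beta\lambda+(1-\beta)\mu$ is then obtained for free by summing the recursion over $k=1,\dots,L$.

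It remains to solve this recursion. The characteristic equation $(1-\beta)r^{2}-r+\beta=0$ has roots $1$ and $\beta/(1-\beta)$, distinct exactly when $\beta\neq 1/2$; writing $\theta_{k}=A+B(\beta/(1-\beta))^{k}$, imposing $\theta_{0}=\lambda$ and $\theta_{L+1}=\mu$, and simplifying the resulting expressions gives precisely $\theta_{i}=w_{i}(\beta)\lambda+(1-w_{i}(\beta))\mu$ with $w_{i}(\beta)$ as in \eqref{eq: mixed thetas implicit} (the boundary case $\beta=1$ being covered directly, the recursion then reading $\theta_{k}=\theta_{k-1}$). For $\beta=1/2$ the double root $1$ gives the affine solution $\theta_{k}=A+Bk$, and the same two boundary conditions yield $w_{i}(1/2)=(L-i+1)/(L+1)$. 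Combining with the first paragraph, $\Lstat$ is an invariant probability distribution for $\MLud$ and, by irreducibility, the unique one. I expect the only real obstacle to be the bookkeeping of the middle step: correctly enumerating the five predecessor families, tracking the boundary transitions ($i=1$ deletion, $i=L$ exit-at-the-top and insertion-at-the-top) and the states that drop out when coordinates of $m'$ vanish, keeping every Poisson ratio straight, and then seeing that after all cancellations the identity collapses to the single clean recursion; the recursion itself and the check that $\Lstat$ is a probability measure are routine.
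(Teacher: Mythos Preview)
Your proposal is correct and follows essentially the same route as the paper: both verify the global balance equation by writing $\Lstat(m)\propto c(m)\tilde\pi_L(m)$ with $\tilde\pi_L$ a product of Poissons, compute the five incoming families, cancel the Poisson ratios against the linear rates, and reduce the identity to the three-term recursion $\theta_i=\beta\theta_{i-1}+(1-\beta)\theta_{i+1}$ with boundary data $\theta_0=\lambda$, $\theta_{L+1}=\mu$. The only cosmetic difference is that you solve this recursion directly via the characteristic equation $(1-\beta)r^2-r+\beta=0$, whereas the paper packages the same second-order difference equation inside a tridiagonal-matrix/Cramer's-rule lemma; your version is shorter and equally rigorous.
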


\begin{proof}
Let 
\begin{equation}\label{pi}
\tilde \pi_{L}(m)=\prod_{i=1}^{L}\Po(m_{i};\theta_{i}/i).
\end{equation} 
The global balance condition reads
  \begin{align}\label{mixed computation}
  \sum_{m\in\Z_{+}^{L}}&\,(\beta\lambda+(1-\beta)\mu+\norm{m})\tilde \pi_{L}(m)p_{L}(m'|m)\notag\\
  =&\,\beta\left(\tilde \pi_{L}(m'-e_{1})\lambda
  +\sum_{i=1}^{L-1}\tilde \pi_{L}(m'+e_{i}-e_{i+1})i(m'_{i}+1)
  +\tilde \pi_{L}(m'+e_{L})L(m'_{L}+1)\right)\notag\\
  &\,+(1-\beta)\left(\tilde \pi_{L}(m'+e_{1})(m'_{1}+1)
  +\sum_{i=2}^{L}\tilde \pi_{L}(m'+e_{i}-e_{i-1})i(m'_{i}+1)
  +\tilde \pi_{L}(m'-e_{L})\mu\right)\notag\\
  =&\,\tilde \pi_{L}(m')\Bigg[\beta\left(\frac{\lambda}{\theta_{1}}m'_{1}
  +\sum_{i=1}^{L-1}\frac{\theta_{i}}{i(m'_{i}+1)}
  \frac{(i+1)m'_{i+1}}{\theta_{i+1}}i(m'_{i}+1)
  +\frac{\theta_{L}}{L(m'_{L}+1)}L(m'_{L}+1)\right)\\
  &\,+(1-\beta)\left(\frac{\theta_{1}}{m'_{1}+1}(m'_{1}+1)
  +\sum_{i=2}^{L}\frac{\theta_{i}}{i(m'_{i}+1)}
  \frac{(i-1)m'_{i-1}}{\theta_{i-1}}i(m'_{i}+1)
  +\frac{Lm'_{L}}{\theta_{L}}\mu\right)\Bigg]\notag\\
  =&\,\tilde \pi_{L}(m')\Bigg[\beta\theta_{L}+(1-\beta)\theta_{1}+
  m'_{1}\left(\frac{\beta\lambda+(1-\beta)\theta_{2}}{\theta_{1}}\right)\notag\\
  &\,+\sum_{i=2}^{L-1}im'_{i}\left(
  \frac{\beta\theta_{i-1}+(1-\beta)\theta_{i+1}}{\theta_{i}}\right)
  +Lm'_{L}\left(\frac{\beta\theta_{L-1}+(1-\beta)\mu}{\theta_{L}}\right)\Bigg].\notag
\end{align}
The right hand side equals $\tilde \pi_{L}(m')(\beta\lambda+(1-\beta)\mu+\norm{m'})$
upon imposing
\begin{equation*}
\left\{
\begin{array}{l}
  \beta\lambda+(1-\beta)\theta_{2}=\theta_{1},   \\
  \beta\theta_{i-1}+(1-\beta)\theta_{i+1}=\theta_{i},\quad i=2,\ldots,L-1,   \\
  \beta\theta_{L-1}+(1-\beta)\mu=\theta_{L},  \\
  \beta\theta_{L}+(1-\beta)\theta_{1}=\beta\lambda+(1-\beta)\mu.
\end{array}
\right.
\end{equation*} 
The last equation equals the sum of the first $L$, hence $(\theta_1,\ldots,\theta_L)$ is the solution of the system of $L$ linear equations
\begin{equation*}
\left\{
\begin{array}{ll}
  \theta_{1}-(1-\beta)\theta_{2}= \beta\lambda \\
  \theta_{i}-\beta\theta_{i-1}-(1-\beta)\theta_{i+1}=0,\quad i=2,\ldots,L-1,\\
  \theta_{L}-\beta\theta_{L-1}=(1-\beta)\mu.
\end{array}
\right.
\end{equation*} 
Lemma \ref{lem: system} in the Appendix now implies that the solution of the system is given by \eqref{eq: mixed thetas implicit}, and the statement follows by dividing both sides of \eqref{mixed computation} by the normalising constant
  $$\sum_{m\in\Z_{+}^{L}}\,(\beta\lambda+(1-\beta)\mu+\norm{m})\tilde \pi_{L}(m)
  =\beta\lambda+(1-\beta)\mu+\sum_{i=1}^{L}\theta_{i}.$$
Finally, uniqueness follows from positive recurrence, which can be easily proved.
\end{proof}

An interpretation for the dependence among the counts $m_i$ at stationarity can be provided by means of an alternative representation for the invariant distribution. 
Let $m_{i}$ be mutually independent each with
$\text{Po}(m_{i}; \theta_{i}/i)$ 
distribution, 
 $\theta_i$ as in \eqref{eq: mixed thetas implicit}, and select $J=j$ with probability proportional to $\theta_{j}$ for $j=1,\ldots,L$ and to 
$\beta\lambda+(1-\beta)\mu$
for $j=0$. 
Then $m'=(m_{1},\ldots,m_{j-1},m_{j}+1,m_{j+1},\ldots,m_{L})$ has distribution \eqref{invariant up-down}. This can be easily shown by exploiting the fact that if $Z$ has $\text{Po}(z;\lambda)$ distribution, then $Z+1$ has probability mass function $\text{Po}(z;\lambda)z/\lambda$. 
An informal interpretation for the component-specific parameters $\theta_{i}$'s in \eqref{eq: mixed thetas implicit} can be provided by recalling that new groups enter the system from the left (i.e., $m\mapsto m+e_{1}$) with probability proportional to $\lambda$ and from the right (i.e., $m\mapsto m+e_{L}$) with probability proportional to $\mu$. 
It is easily checked that $w_i(\beta)$ is decreasing in $i$. Then \eqref{eq: mixed thetas implicit} expresses the fact that the effect of $\lambda$ (resp.~$\mu$) on $m_i$ is stronger for small (resp.~large) $i$.
For odd $L$, the median parameter $\theta_{(L+1)/2}$ simplifies to 
\begin{equation}\label{theta central}
  w_{(L+1)/2}(\beta)=\displaystyle \frac{\beta^{(L+1)/2}
  }{\beta^{(L+1)/2}+(1-\beta)^{(L+1)/2}}, 
  \quad \quad \beta\ne1/2,
\end{equation} 
which shows more explicitly the effect of $\beta$ on the median count and its dependence on the number of counts separating it from the extremal $m_{1}$ and $m_{L}$ (see the proof of Lemma \ref{lem: system}). 
When $\beta=1/2$, \eqref{theta central} further simplifies to $1/2$, and $\theta_{(L+1)/2}$ reduces to $(\lambda+\mu)/2$.


\section{Birth-and-death P\'olya urns}
\label{sec: delayed urn}

B\&D-PUs have been defined in the Introduction to be partition-valued Markov chains $\Mud$ with state space $\Z_{+}^{\infty}$ and transition probabilities \eqref{BD-PU transitions}. Informally, the underlying sampling process can be thought of as P\'olya urn sampling where particles are deleted at random times. 
Here we exploit the class of chains with maximal allelic count
$\MLud$, introduced in the previous section, for identifying the asymptotic regimes of $\Mud$.
The strategy is to let a sequence of chains with maximal capaciy $\{M_{L},L\in N\}$ converge to the B\&D-PU $M$ as $L\rightarrow \infty$, and then obtain the asymptotic regimes as appropriate limits of the marginal distributions of $M_{L}$. We achieve this by letting the probability of introducing $L$-sized blocks in $M_{L}$ be governed by $\mu_{L}$ instead of $\mu$ (cf.~\eqref{up-down transitions explicit}), and letting $\mu_{L}$ converge to zero appropriately fast as $L\rightarrow \infty$. 
The key intuition here is that since the probability of blocks entering the system for left and right is proportional to $\beta\lambda$ and $(1-\beta)\mu_{L}$ respectively, the expected number of items entering the system from left and right is proportional to $\beta\lambda$ and $(1-\beta)L\mu_{L}$ respectively, so the second term needs to go to zero appropriately fast, as $L\rightarrow \infty$, in order to obtain asymptotically well-defined dynamics. 

First we identify, with the following result, the weak limits of the invariant distribution of $\MLud$, determined in Theorem \ref{thm: up-down chain}, as $\mu_{L}$ goes to 0 with $L$. 

\begin{theorem}\label{prop: limit from dependence 2}
Let $\{\mu_L\}_{L\geq 1}\subset \R_{+}$ be decreasing and such that $L\mu_{L}\rightarrow 0$ as $L\rightarrow\infty$, and let $(Z_{1}^{(L)},\ldots,Z_{L}^{(L)})$ have distribution \eqref{invariant up-down}, with $\theta_i$ as in \eqref{eq: mixed thetas implicit} 
replacing $\mu$ with $\mu_L$. Then, as $L\to\infty$,
  $$(Z_{1}^{(L)},Z_{2}^{(L)},\ldots)
  \stackrel{d}{\to}
  (Z_{1},Z_{2},\ldots),$$ 
where $(Z_{1},Z_{2},\ldots)$:
\begin{enumerate}[{(}i{)}]
\item
are independent Poisson random variables with mean $\lambda/i$, if $\beta\in [1/2,1]$;
\item have joint distribution \eqref{pi in intro}, if $\beta\in(0,1/2)$.
\end{enumerate}
\end{theorem}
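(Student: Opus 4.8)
The plan is to establish convergence of all finite-dimensional marginals of $(Z_1^{(L)},Z_2^{(L)},\ldots)$, which on $\Z_+^{\infty}$ with the product topology is equivalent to the claimed convergence in distribution (the cylinder events $\{z_1=m_1,\ldots,z_k=m_k\}$ are clopen and convergence-determining). Fix $k\in\N$. Writing $\tilde\pi_L(m)=\prod_{i=1}^{L}\Po(m_i;\theta_i/i)$ as in the proof of Theorem~\ref{thm: up-down chain}, so that under $\tilde\pi_L$ the $m_i$ are independent Poissons with $\sum_m\norm{m}\,\tilde\pi_L(m)=\sum_{i=1}^{L}\theta_i$, summing \eqref{invariant up-down} over $m_{k+1},\ldots,m_L$ gives the explicit marginal
\[
\P\big(Z_1^{(L)}=m_1,\ldots,Z_k^{(L)}=m_k\big)
=\prod_{i=1}^{k}\Po(m_i;\theta_i/i)\,
\frac{\beta\lambda+(1-\beta)\mu_L+\sum_{i=1}^{k}im_i+\sum_{i=k+1}^{L}\theta_i}
{\beta\lambda+(1-\beta)\mu_L+\sum_{i=1}^{k}\theta_i+\sum_{i=k+1}^{L}\theta_i},
\]
where $\theta_i=\theta_i^{(L)}$ depends on $L$ through $w_i(\beta)$ in \eqref{eq: mixed thetas implicit} and through $\mu_L$. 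Everything then reduces to the asymptotics, as $L\to\infty$, of each fixed $\theta_i^{(L)}$ and of the tail sum $S_L:=\sum_{i=k+1}^{L}\theta_i^{(L)}$.

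Next I would read off the limits of $w_i^{(L)}(\beta)$ for fixed $i$ by pulling out the dominant geometric factor in \eqref{eq: mixed thetas implicit}: for $\beta>1/2$, $w_i^{(L)}(\beta)=(1-r^{\,L-i+1})/(1-r^{\,L+1})$ with $r=(1-\beta)/\beta\in[0,1)$; for $\beta<1/2$, $w_i^{(L)}(\beta)=\rho^{\,i}(1-\rho^{\,L-i+1})/(1-\rho^{\,L+1})$ with $\rho=\beta/(1-\beta)\in(0,1)$; and $w_i^{(L)}(1/2)=(L-i+1)/(L+1)$. Letting $L\to\infty$ gives $w_i^{(L)}(\beta)\to1$ when $\beta\in[1/2,1]$ and $w_i^{(L)}(\beta)\to\rho^{\,i}$ when $\beta\in(0,1/2)$; since $\theta_i^{(L)}=w_i^{(L)}(\beta)\lambda+(1-w_i^{(L)}(\beta))\mu_L$ and $\mu_L\to0$, this yields $\theta_i^{(L)}\to\lambda$ in case~(i) and $\theta_i^{(L)}\to\lambda(\beta/(1-\beta))^i$ in case~(ii). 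In particular the prefactor $\prod_{i=1}^{k}\Po(m_i;\theta_i^{(L)}/i)$ converges to $\prod_{i=1}^{k}\Po(m_i;\lambda/i)$ in case~(i) and to $\prod_{i=1}^{k}\Po(m_i;\theta_i/i)$ with the $\theta_i$ of \eqref{pi in intro} in case~(ii).

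It remains to handle $S_L$ and pass to the limit in the displayed marginal. In case~(i) I would show $S_L\to\infty$: for $\beta\in(1/2,1]$ the formula above shows $w_i^{(L)}(\beta)$ decreasing in $i$, so $w_i^{(L)}(\beta)\ge w_L^{(L)}(\beta)\ge1-r>0$ uniformly in $i$, while for $\beta=1/2$ one has $\theta_i^{(L)}\ge\frac{L-i+1}{L+1}\lambda$; in both cases $S_L\gtrsim L\to\infty$, so the fraction tends to $1$ and the $k$-marginal converges to $\prod_{i=1}^{k}\Po(m_i;\lambda/i)$, i.e.\ to the law of independent $\Po(\lambda/i)$ variables. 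In case~(ii), bounding the numerator in the formula for $w_i^{(L)}(\beta)$ by $1$ and the denominator below by $1-\rho$ gives the uniform-in-$L$ geometric bound $w_i^{(L)}(\beta)\le\tfrac{1-\beta}{1-2\beta}\,\rho^{\,i}$; splitting $\theta_i^{(L)}=w_i^{(L)}(\beta)\lambda+(1-w_i^{(L)}(\beta))\mu_L$ and using $\sum_{i=k+1}^{L}(1-w_i^{(L)}(\beta))\mu_L\le L\mu_L\to0$ together with dominated convergence for the resulting series (dominant summand $\tfrac{\lambda(1-\beta)}{1-2\beta}\rho^{\,i}$) gives $S_L\to\lambda\sum_{i>k}\rho^{\,i}$. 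Substituting these limits into the displayed marginal, and using $\beta\lambda+\sum_{i\ge1}\lambda\rho^{\,i}=\beta\lambda+\beta\lambda/(1-2\beta)$, identifies the limit with the $k$-dimensional marginal of \eqref{pi in intro}; since $k$ was arbitrary, the theorem follows.

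The step I expect to be the main obstacle is the control of $S_L=\sum_{i=k+1}^{L}\theta_i^{(L)}$ in case~(ii): one sums $\Theta(L)$ terms that themselves vary with $L$, so one needs a uniform summable envelope for $w_i^{(L)}(\beta)$, and the piece $\sum_{i=k+1}^{L}(1-w_i^{(L)}(\beta))\mu_L$ --- coming from the ``right inflow'' of $L$-sized blocks --- is of order $L\mu_L$; this is precisely where the hypothesis $L\mu_L\to0$ (not merely $\mu_L\to0$) is used. The rest is a routine limit in explicit formulae.
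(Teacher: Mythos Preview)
Your argument is correct and is in fact more elementary than the paper's. Both proofs begin with the same pointwise limits $\theta_i^{(L)}\to\lambda$ (for $\beta\ge1/2$) or $\theta_i^{(L)}\to\lambda(\beta/(1-\beta))^i$ (for $\beta<1/2$), but diverge from there. The paper works with Laplace transforms $\E_L(\mathrm{e}^{-\sum_i\phi_im_i})$, invokes the exact closed form \eqref{eq:sum_thetaL} for $\sum_{i=1}^{L}\theta_i^{(L)}$ from Lemma~\ref{lem: system}, and to justify the interchange of sum and limit appeals to monotone convergence via a separate Lemma~\ref{lemma:monotone} establishing $\theta_i^{(L-1)}\le\theta_i^{(L)}$ for large $L$; in case~(i) it then uses Ces\`aro's theorem to show the tilting factor tends to $1$. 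You instead compute the $k$-dimensional marginal pmf directly, reducing everything to the behaviour of $S_L=\sum_{i>k}\theta_i^{(L)}$; your uniform geometric envelope $w_i^{(L)}(\beta)\le\rho^i/(1-\rho)$ gives dominated convergence without needing Lemma~\ref{lemma:monotone}, and your split $\theta_i^{(L)}=w_i^{(L)}\lambda+(1-w_i^{(L)})\mu_L$ isolates an $O(L\mu_L)$ contribution, making transparent exactly where the hypothesis $L\mu_L\to0$ enters---something the paper's proof obtains only after expanding \eqref{eq:sum_thetaL}. Your lower bounds $w_i^{(L)}\ge1-r$ (for $\beta>1/2$) and $w_i^{(L)}\ge(L-i+1)/(L+1)$ (for $\beta=1/2$) replace the Ces\`aro step. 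The trade-off is that the Laplace-transform route is the more portable template (e.g.\ for continuous-state analogues), whereas your route is shorter, self-contained, and dispenses with two auxiliary lemmas.
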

\begin{proof}
Let $w_i(\beta)$ be as in \eqref{eq: mixed thetas implicit}. As $L\to\infty$, it is easy to see that, for any $i$, $w_i(\beta)\to (\beta/(1-\beta))^i$ when $\beta<1/2$ and $w_i(\beta)\to 1$ when $\beta\geq 1/2$.
Also, let $\theta_i^{(L)}$ be as in \eqref{eq: mixed thetas implicit} with $\mu=\mu_L$. Since $\mu_L\rightarrow0$ as $L\rightarrow\infty$, $\theta_i^{(L)}\to (\beta/(1-\beta))^{i}\lambda$ when $\beta<1/2$ and $\theta_i^{(L)}\to\lambda$ when $\beta\geq 1/2$. Then, by using \eqref{eq:sum_thetaL} of Lemma \ref{lem: system} in the Appendix, as $L\to\infty$ we have
  $$\sum_{i=1}^L\theta_i^{(L)}
  =\begin{cases}
  \displaystyle  
  L\mu_L+\frac{\beta}{1-2\beta}(\lambda-\mu_L)
  +o(L\mu_L),&\mbox{if }\beta<1/2,\\[3mm]
  \displaystyle  
  \frac{\lambda+\mu_L}{2}L,
  &\mbox{if }\beta=1/2,\\[3mm]
  \displaystyle  
  \lambda L+\frac{1-\beta}{1-2\beta}(\lambda-\mu_L)
  +o(\mu_L),&\mbox{if }\beta>1/2.
  \end{cases}$$
Thus $L\mu_L\to0$ implies that $\sum_{i=1}^L \theta_i^{(L)}$ converges to $\lambda \beta/(1-2\beta)$ when $\beta<1/2$ since, and  diverges for $\beta\ge1/2$. 

Denote now by $\E_L$, $\tilde\E_L$ and $\tilde\E_\infty$ the expectations with respect to $\Lstat$ in \eqref{invariant up-down} and $\tilde \pi_{L},\tilde \pi_{\infty}$ in \eqref{pi}, respectively, with parameters $\theta_i^{(L)}$ in $\Lstat$ and $\tilde \pi_{L}$, and $\theta_{i}=(\beta/(1-\beta))^{i}\lambda$ for $\beta<1/2$ and $\theta_{i}=\lambda$ for $\beta\geq 1/2$
in $\tilde \pi_{\infty}$.
For any sequence $\{\phi_{i}\}$ such that 
\begin{equation*}
\left\{
\begin{array}{ll}
\displaystyle   \sum_{i\geq 1}\phi_i\left(\frac{\beta}{1-\beta}\right)^i\frac{1}{i}<\infty,\quad  &
  \beta< 1/2\\[4pt]
\displaystyle   \sum_{i\geq 1}\frac{\phi_i}{i}<\infty,& \beta\geq 1/2,
\end{array}
\right.
\end{equation*} 
we have
\begin{equation}
\begin{split}  \label{eq:Laplace}
  \E_L&\,\left(\edr^{-\sum_{i\geq1}\phi_i m_i}\right)
  =\frac{\beta\lambda+(1-\beta)\mu_L}
  {\beta\lambda+(1-\beta)\mu_L
  +\sum_{i=1}^{L}\theta^{(L)}_i}
  \tilde\E_L\left(\edr^{-\sum_{i=1}^{L}\phi_i m_i}\right)\\
  &+\frac{1}{\beta\lambda+(1-\beta)\mu_L
  +\sum_{i=1}^L\theta^{(L)}_i}
  \sum_{i=1}^L\tilde\E_L
  \left(\edr^{-\sum_{i=1}^{L}\phi_i m_i}i m_i\right)\\
  =&\,\frac{\beta\lambda+(1-\beta)\mu_L}
  {\beta\lambda+(1-\beta)\mu_L
  +\sum_{i=1}^L\theta^{(L)}_i}
  \tilde\E_L\left(\edr^{-\sum_{i=1}^L\phi_i m_i}\right)\\
  &+\frac{1}{\beta\lambda+(1-\beta)\mu_L
  +\sum_{i=1}^L\theta^{(L)}_i}
  \sum_{i=1}^L\theta_{i}^{(L)}\edr^{-\phi_i}
  \tilde\E_L\left(\edr^{-\sum_{i=1}^L\phi_i m_i}\right)\\
  =&\,\frac{\beta\lambda+(1-\beta)\mu_L
  +\sum_{i=1}^L\theta_i^{(L)}
  \edr^{-\phi_{i}}}{\beta\lambda+(1-\beta)\mu_L
  +\sum_{i=1}^L\theta^{(L)}_i}
  \tilde\E_L\left(\edr^{-\sum_{i=1}^L\phi_i m_i}\right)
\end{split}
\end{equation}
where the second and third equalities follow from 
\begin{equation}\label{laplace derivative}\nonumber
\begin{split}
\E(\edr^{-\phi X})=&\,\exp\{-\theta(1-\edr^{-\phi})\}\\
  \E(X\edr^{-\phi X})=&\,-\frac{\ddr}{\ddr \phi}\E(\edr^{-\phi X})
  =\theta\edr^{-\phi}\exp\{-\theta(1-\edr^{-\phi})\},
\end{split}
\end{equation}
for $X\sim\text{Po}(x; \theta)$.
Here
\begin{equation*}
  \lim_{L\to\infty}\tilde\E_L\left(
  \edr^{-\sum_{i=1}^L\phi_i m_i}\right)
  =\tilde\E_\infty
  \left(\edr^{-\sum_{i\ge1}\phi_i m_i}\right)
\end{equation*} 
as long as
  $$\lim_{L\to\infty}\sum_{i=1}^L\frac{\theta_i^{(L)}}{i}
  (1-\edr^{-\phi_i})
  =\sum_{i\geq1}\frac{\theta_i}{i}(1-\edr^{-\phi_i}).$$
The latter is in turn implied by $\theta_{i}^{(L)}\rightarrow \theta_{i}$ and an application of the monotone convergence theorem, since
  $\theta_i^{(L-1)}\leq \theta_i^{(L)}$
for $L$ large when $\mu_L$ decreases to 0; see Lemma \ref{lemma:monotone} in the Appendix.
As for the first factor on the right hand side of \eqref{eq:Laplace}, an application of Cesaro's Theorem, together with the fact that $\beta\geq 1/2$, $\sum_{i=1}^{L}\theta^{(L)}_{i}\to\infty$ and $\phi_{i}\rightarrow 0$, yields
\begin{equation}\label{eq:limit1}\notag
  \lim_{L\to\infty}
  \frac{\beta\lambda+(1-\beta)\mu_L
  +\sum_{i=1}^{L}\theta_{i}^{(L)}\edr^{-\phi_{i}}}
  {\beta\lambda+(1-\beta)\mu_L
  +\sum_{i=1}^{L}\theta^{(L)}_{i}}
  =1.
\end{equation}   
%
%
%
When $\beta<1/2$ and $\lim_{L\to\infty}L\mu_L=0$,
\begin{equation}\label{eq:limit2}
  \lim_{L\to\infty}
  \frac{\beta\lambda+(1-\beta)\mu_L+\sum_{i=1}^{L}\theta_{i}^{(L)}
  \edr^{-\phi_{i}}}{\beta\lambda+(1-\beta)\mu_L+\sum_{i=1}^{L}\theta^{(L)}_{i}}
  =\frac{\beta\lambda+\sum_{i\geq1}\theta_i
  \edr^{-\phi_{i}}}{\beta\lambda
  +\beta\lambda/(1-2\beta)},
\end{equation}  
where at the numerator we have applied the monotone convergence theorem. Noting that
  $$\frac{\beta\lambda+\sum_{i\geq1}\theta_i
  \edr^{-\phi_{i}}}{\beta\lambda
  + \beta\lambda/(1-2\beta)}
  \tilde\E_\infty
  \left(\edr^{-\sum_{i\ge1}\phi_i m_i}\right)$$
corresponds to the Laplace transform of $m$ under the distribution \eqref{pi in intro} completes the proof.
\end{proof}

Note that, when $\beta\geq 1/2$, the weaker assumption that $\mu_{L}\rightarrow 0$ suffices for the above result. This is informally due to the fact that $\beta\geq 1/2$ makes the addition of size-1 blocks to the system frequent enough to counterbalance the frequency of $L$-sized  blocks entering the system from the right when $\mu_{L}\rightarrow 0$, instead of $L\mu_{L}\rightarrow 0$. When $\beta<1/2$, this is not the case and $\mu_{L}$ must go to zero faster than $1/L$. 

Next, we exploit an embedding of $\MLud$ in $\Mud$ at appropriate stopping times, in order to show that the weak limits in Theorem \ref{prop: limit from dependence 2} describe the long-time behaviour of the B\&D-PU. Note that,  when $\beta\ge1/2$, the result extends Theorem 1 in \citet{ABT92}. 


%

\begin{theorem}\label{prop: limit distribution of delayed Polya urn}
Let $\Mud$ be a \emph{B$\mathrm{\&}$D-PU} with transitions \eqref{BD-PU transitions}, and let  $Z_{j}(h)$ its $j$-th component. Then, as $h\to\infty$, 
  $$(Z_{1}(h),Z_{2}(h),\ldots)
  \stackrel{d}{\to}
  (Z_{1},Z_{2},\ldots)$$ 
where $(Z_{1},Z_{2},\ldots)$:
\begin{enumerate}[{(}i{)}]
\item
have joint distribution \eqref{pi in intro},
for $\beta\in(0,1/2)$;
\item
are independent Poisson random variables with mean $\lambda/i$, for $\beta\in[1/2,1]$.
\end{enumerate}
\end{theorem}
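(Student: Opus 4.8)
The plan is to prove the theorem by an embedding argument that links the B\&D-PU $\Mud$ to the family of finite-dimensional chains $\MLud$ studied in Section~\ref{sec: up and down chains}, and then to invoke Theorem~\ref{prop: limit from dependence 2} for the weak limits. First I would fix $L$ and compare a B\&D-PU with the chain $\MLud$ run with parameter $\mu=\mu_L$, where $\{\mu_L\}$ is any sequence decreasing to $0$ with $L\mu_L\to 0$ (in the $\beta\ge 1/2$ case, just $\mu_L\to 0$, exploiting the remark after Theorem~\ref{prop: limit from dependence 2}). The key observation is that the two transition kernels \eqref{BD-PU transitions} and \eqref{up-down transitions explicit} agree exactly on the sub-state-space $\{m:\ m_i=0\text{ for }i>L\}$ except for the two "boundary" transitions at the $L$-th coordinate: $\MLud$ has extra moves $m\mapsto m-e_L$ (rate $\propto \beta L m_L$) and $m\mapsto m+e_L$ (rate $\propto (1-\beta)\mu_L$), whereas $\Mud$ instead lets a block of size $L$ grow to size $L+1$. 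So if we start both chains at the origin and watch the B\&D-PU until the first time $\tau_L$ it produces a block of size $L{+}1$ (equivalently, the first time its $(L{+}1)$-st coordinate becomes positive), up to that time the only discrepancy in law comes from the rate-$(1-\beta)\mu_L$ insertions of $L$-blocks in $\MLud$; as $\mu_L\to 0$ these become vanishingly rare. This gives a coupling on which $\Mud$ and $\MLud$ agree on coordinates $1,\dots,L$ with probability tending to $1$ on compact time windows, and more importantly one can run $\MLud$, observe it at the successive hitting times of its own "regeneration" structure, and read off a copy of the first $L$ coordinates of $\Mud$.

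The second ingredient is ergodicity: by Theorem~\ref{thm: up-down chain}, $\MLud$ is positive recurrent with the explicit stationary law $\Lstat$ in \eqref{invariant up-down} (with $\mu=\mu_L$), so $\MLud(h)\stackrel{d}{\to}\Lstat$ as $h\to\infty$. Composing with the embedding, I would argue that the marginal law of $(Z_1(h),\dots,Z_L(h))$ for the B\&D-PU, as $h\to\infty$, is sandwiched between — and asymptotically equal to — the corresponding marginal of $\Lstat$ with $\mu=\mu_L$, up to the error coming from the $L$-block insertions, which is $O(L\mu_L)$ and hence negligible in the chosen regime. Letting $L\to\infty$ along $\mu_L$, Theorem~\ref{prop: limit from dependence 2} identifies the limit of these $\Lstat$-marginals: independent $\Po(\lambda/i)$'s when $\beta\in[1/2,1]$, and the law \eqref{pi in intro} when $\beta\in(0,1/2)$. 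Since finite-dimensional convergence of the first $L$ coordinates for every $L$ is exactly convergence in $\Z_+^\infty$ with the product topology, this yields the stated weak limit for $(Z_1(h),Z_2(h),\dots)$. One should also check, via a tightness/positive-recurrence-type argument (e.g. a Foster--Lyapunov bound on $\|m\|=\sum_i i m_i$ using \eqref{bernoulli parameter}, which shows removals dominate when the sample is large), that $\Mud$ itself has a genuine limiting distribution in the $\beta<1/2$ case — so that the finite-dimensional limits assemble into a bona fide probability law on $\Z_+^\infty$ rather than mass escaping to infinity; for $\beta\ge 1/2$ no contradiction arises because the limit there lives on configurations with infinitely many blocks, consistent with the transient behaviour.

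The main obstacle I anticipate is making the embedding rigorous: one must carefully specify the stopping times at which $\MLud$ is "read into" $\Mud$ (natural candidates are the visits of $\MLud$ to states with $m_L=0$, at which the boundary discrepancy is dormant, or an explicit time-change that skips the spurious $L$-block moves), and then verify that the resulting process has exactly the transition law \eqref{BD-PU transitions} restricted to coordinates $\le L$, with the $\tau_L$-truncation error controlled uniformly. A clean way to do this is a direct coupling: build $\Mud$ and $\MLud$ on a common probability space driven by the same Poisson clocks for all transitions except the two $L$-boundary ones, show they coincide up to $\tau_L\wedge(\text{first }L\text{-block insertion in }\MLud)$, and bound both bad events. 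Handling the exchange of limits $h\to\infty$ then $L\to\infty$ (and justifying that the order can effectively be swapped for the marginals) is the delicate quantitative point; the monotonicity $\theta_i^{(L-1)}\le\theta_i^{(L)}$ from Lemma~\ref{lemma:monotone} and the asymptotics of $\sum_{i=1}^L\theta_i^{(L)}$ already established in the proof of Theorem~\ref{prop: limit from dependence 2} are exactly what is needed to keep the error terms under control. Everything else — positive recurrence of $\MLud$, the Laplace-transform computation — is in place from the earlier results.
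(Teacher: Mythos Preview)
Your high-level strategy --- embed the finite chains $\MLud$ into the B\&D-PU, invoke their ergodicity (Theorem~\ref{thm: up-down chain}), then let $L\to\infty$ via Theorem~\ref{prop: limit from dependence 2} --- is exactly the paper's. The gap is in the embedding mechanism.

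You try to couple $\Mud$ and $\MLud$ directly and control the discrepancy up to the random time $\tau_L$ at which $\Mud$ first produces a block of size $L{+}1$. But $\tau_L$ is a.s.\ finite, so this says nothing about $\Mud(h)$ for $h$ large; the assertion that the $h\to\infty$ marginal of $\Mud$ is ``sandwiched'' by $\Lstat$ up to an $O(L\mu_L)$ error is stated but not derived from the coupling. The exchange of limits you flag as delicate cannot be closed with a finite-horizon coupling alone, and your candidate stopping times (visits of $\MLud$ to $\{m_L=0\}$) do not help, since after $\tau_L$ the chain $\Mud$ carries mass above level $L$ that $\MLud$ cannot see.

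The device you are missing is the paper's intermediate chain $\MLc$ on $\Z_+^\infty$ (transitions \eqref{delayed urn modified}): it keeps \emph{all} B\&D-PU moves, including growth past level $L$, but replaces the single downward rate for $m\mapsto m-e_{L+1}+e_L$ by $(1-\beta)\mu_L$ in place of $(1-\beta)(L{+}1)m_{L+1}$. Letting $\sigma_k$ be the $k$th time a transition of $\MLc$ affects one of the first $L$ coordinates, Proposition~\ref{prop: L chain embedded 2} shows that $\{\rho_L(\MLc(\sigma_k)):k\ge0\}$ is an exact copy of $\MLud$ --- valid for all $k$, regardless of excursions of $\MLc$ above level $L$. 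The stopping-time families nest, $\{\sigma_k^{(L)}\}\subset\{\sigma_k^{(L+1)}\}$, and increase to $\N$ as $L\to\infty$ (from the origin only coordinates $\le h$ are touched by time $h$), so for each fixed $h$ eventually $\sigma_h^{(L)}=h$ and $\rho_N(\MLc(h))=\rho_N(\MLc(\sigma_h))$. This converts the ergodic statement $\rho_N(\MLc(\sigma_k))\Rightarrow(Z_1^{(L)},\dots,Z_N^{(L)})$ into one about $\rho_N(\MLc(h))$; sending $L\to\infty$ (so $\mu_L\to0$ and $\MLc$ reduces to $\Mud$) together with Theorem~\ref{prop: limit from dependence 2} finishes the proof. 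Your proposed Foster--Lyapunov tightness step is not needed: finite-dimensional convergence in $\Z_+^\infty$ with the product topology is all that is claimed, and the limit law is already identified as a probability measure.
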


\begin{proof}
Let $\rho_L:\Z_+^\infty\to\Z_+^L$, defined as
\begin{equation}\label{rho}
\rho_{L}(m)=(m_1,\ldots,m_L),
\end{equation} 
be the restriction of $m\in\Z_{+}^{\infty}$ to its first $L$ components. We show that, for any $N\in\N$,
  $\rho_N(\Mud(h))
  \stackrel{d}{\to}
  (Z_{1},\ldots,Z_{N})$
as $h\rightarrow \infty$ in the two regimes. To this end, define the auxiliary chains
$\MLc=\{\MLc(h),h\in\Z_{+}\}$ on $\Z_{+}^{\infty}$, with transition probabilities
\begin{equation}\label{delayed urn modified}
 \tilde p_{L}(m'|m)\propto
  \begin{cases}
\beta\lambda, \quad 
  & m'=m+e_{1},  \\
\beta im_{i}, \quad 
  & m'=m-e_{i}+e_{i+1},\quad i\ge1,  \\
(1-\beta) im_{i}, 
  & m'=m-e_{i}+e_{i-1},\quad i\ne L+1,\\
(1-\beta)\mu_{L}, 
  & m'=m-e_{L+1}+e_{L}.
  \end{cases}
\end{equation} 
and normalising constant $\beta\lambda+(1-\beta)\mu_{L}+\norm{m}-(1-\beta)(L+1)m_{L+1}$.
%
Here $\MLc$ differs from a B\&D-PU in that transitions $m\mapsto m-e_{L+1}+e_{L}$, whereby one item from an $(L+1)$-sized group is removed, have probability proportional to
$(1-\beta)\mu_{L}$ instead of $(1-\beta)(L+1)m_{L+1}$. Note that count $m_L$ and $m_{L+1}$ remain dependent, in view of the transition $m\mapsto m-e_{L}+e_{L+1}$.
Let $\{\mu_{L}\}_{L\geq 1}$ be a decreasing sequence such that, as $L\rightarrow \infty$, $L\mu_L\to 0$.
Let
\begin{equation}\label{markov times 2}
\sigma_{k}
  =\min\big\{h>\sigma_{k-1}:\ \rho_{L}(\MLc(h))\ne \rho_{L}(\MLc(\sigma_{k-1}))\big\},
\end{equation} 
to be the $k$th time a transition of $\MLc$ involves the first $L$ 
counts.
Proposition \ref{prop: L chain embedded 2} in the Appendix shows that $\MLud$ is embedded in $\MLc$ at the stopping times $\sigma_{k}$, in that 
$\{\sigma_{k},n\ge1\}$ occur infinitely often 
and
  $$\P\left(\rho_L(\MLc(\sigma_{k}))
  =m'\left| \rho_L(\MLc(\sigma_{k-1}))=m\right)\right.=
  p_{L}(m'|m),$$
with $p_{L}$ as in \eqref{up-down transitions explicit} with $\mu=\mu_L$. Together with Theorem \ref{thm: up-down chain}, this implies
\begin{equation*}
\rho_L\left(\MLc(\sigma_{k})\right)  \stackrel{d}{\to} 
  (Z_{1}^{(L)},\ldots,Z_{L}^{(L)}),\quad \quad 
  \mbox{as }k\rightarrow \infty,
\end{equation*} 
where the right hand side has distribution $\Lstat$ with $\mu=\mu_L$, see \eqref{invariant up-down}. Clearly, this also implies 
\begin{equation}\label{ergodicity of embedding2}
  \rho_N\left(\MLc(\sigma_{k})\right)
  \stackrel{d}{\to} 
  (Z_{1}^{(L)},\ldots,Z_{N}^{(L)}),\quad 
  \mbox{as }n\rightarrow \infty
\end{equation}
for any $N\leq L$.
Emphasising the dependence on $L$ in \eqref{markov times 2} by $\sigma^{(L)}_{k}$, note now that
 $\{\sigma^{(L)}_{k},n\ge1\}\subset \{\sigma^{(L+1)}_{n},n\ge1\}$
and 
  $\{\sigma^{(L)}_{k},n\ge1\}\ua \N$
as $L\rightarrow \infty$ with probability one, since for all $h\in\N$ there exists an $L_{0}$ such that  $\{1,\ldots,h\}\subset   \{\sigma^{(L)}_{k},n\ge1\}$ for all $L\ge L_{0}$. Therefore, for any given $h$,
  $$\rho_N\left(\MLc(h)\right)=\rho_N\left(\MLc(\sigma_{h})\right)$$
  for $L$ sufficiently large.
The result now follows by taking the limit for $L\rightarrow \infty$ on both sides of \eqref{ergodicity of embedding2}, in virtue of Theorem \ref{prop: limit from dependence 2}.
\end{proof}

The asymptotic regimes of B\&D-PUs are thus determined by the probability of introducing new singleton blocks into the system.
These produce for $\beta\geq1/2$ infinite partitions analogous to those induced by P\'olya urns, since insertion of singletons are frequent enough to make deletions asymptotically irrelevant, and the counts $m_i$ become independent in the limit. 
When $\beta<1/2$, instead, the stream of incoming items is not frequent enough and the dependence between the 
counts $m_i$ is retained in the limit with distribution \eqref{pi in intro}. The next result shows that this latter case provides the reversible and invariant distribution of B\&D-PU with $\beta\in(0,1/2)$. 

\begin{theorem}\label{thm: invariance delayed urn}
Let $M$ have transitions as in \eqref{BD-PU transitions} with $\beta\in(0,1/2)$. Then \eqref{pi in intro} is the reversible and invariant measure of $M$.
\end{theorem}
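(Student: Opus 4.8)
The plan is to verify directly that $\pi$ in \eqref{pi in intro} satisfies the detailed balance equations for the transition kernel \eqref{BD-PU transitions}, which simultaneously yields both reversibility and invariance. Write $\tilde\pi(m)=\prod_{i\ge1}\Po(m_i;\theta_i/i)$ with $\theta_i=\lambda(\beta/(1-\beta))^i$, and let $c(m)=\beta\lambda+\sum_{i\ge1}im_i$ be the (un-normalised) total rate out of state $m$, so that $\pi(m)\propto c(m)\tilde\pi(m)$ and $p(m'|m)=q(m,m')/c(m)$ where $q(m,m')$ is the bare rate read off from \eqref{BD-PU transitions}. Detailed balance $\pi(m)p(m'|m)=\pi(m')p(m'|m)$ then reduces, after the $c(m)$ and $c(m')$ cancel, to the symmetry condition $\tilde\pi(m)q(m,m')=\tilde\pi(m')q(m',m)$ for every admissible pair. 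There are only two types of admissible moves to check: the birth/death of a singleton, $m\leftrightarrow m+e_1$; and the shift $m\leftrightarrow m-e_i+e_{i+1}$ for each $i\ge1$.

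For the singleton move, with $m'=m+e_1$ we have $q(m,m')=\beta\lambda$ and $q(m',m)=(1-\beta)\cdot 1\cdot m_1'=(1-\beta)(m_1+1)$, while $\tilde\pi(m')/\tilde\pi(m)=\Po(m_1+1;\theta_1)/\Po(m_1;\theta_1)=\theta_1/(m_1+1)$; since $\theta_1=\lambda\beta/(1-\beta)$, the identity $\beta\lambda=(1-\beta)(m_1+1)\cdot\theta_1/(m_1+1)$ holds. For the shift $i\to i+1$, with $m'=m-e_i+e_{i+1}$ we have $q(m,m')=\beta i m_i$ and $q(m',m)=(1-\beta)(i+1)m_{i+1}'=(1-\beta)(i+1)(m_{i+1}+1)$, and a short Poisson computation gives $\tilde\pi(m')/\tilde\pi(m)=\big(\theta_{i+1}/((i+1)(m_{i+1}+1))\big)\big(i m_i/\theta_i\big)$; substituting $\theta_{i+1}/\theta_i=\beta/(1-\beta)$ reduces the required equality to $\beta i m_i=(1-\beta)(i+1)(m_{i+1}+1)\cdot\frac{\beta}{1-\beta}\cdot\frac{i m_i}{(i+1)(m_{i+1}+1)}$, which is an identity. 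This is essentially the content already embedded in the telescoping of the linear system in the proof of Theorem \ref{thm: up-down chain}, specialised to $L=\infty$, $\mu=0$, $\theta_i=\lambda(\beta/(1-\beta))^i$; one could alternatively cite that computation and just note the boundary terms at $L$ disappear in the limit.

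It remains to argue that detailed balance is enough to conclude $\pi$ is invariant, i.e. that $\pi$ is a genuine probability measure and the global balance sum converges. Summability of $\pi$ is where the hypothesis $\beta<1/2$ enters: $\sum_i \theta_i/i=\lambda\sum_i(\beta/(1-\beta))^i/i<\infty$ precisely because $\beta/(1-\beta)<1$, so $\tilde\pi$ is a well-defined product of Poisson laws, and $\sum_m c(m)\tilde\pi(m)=\beta\lambda+\sum_i\theta_i=\beta\lambda+\beta\lambda/(1-2\beta)$ is finite — this matches the normalising constant in \eqref{pi in intro}. Given summability, detailed balance implies $\sum_{m}\pi(m)p(m'|m)=\sum_m\pi(m')p(m|m')=\pi(m')$, using $\sum_m p(m|m')=1$ (the chain is conservative since $c(m')<\infty$). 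I would also remark that Theorem \ref{prop: limit distribution of delayed Polya urn}(i) independently identifies \eqref{pi in intro} as the weak limit of the marginals, which confirms it is the (necessarily unique, by irreducibility and positive recurrence) stationary law; but the cleanest self-contained route is the detailed-balance check above. The only mild obstacle is bookkeeping: making sure the proportionality constants $c(m)$ really do cancel correctly and that the two move-types exhaust all transitions of \eqref{BD-PU transitions} — the move $m\to m-e_i+e_{i-1}$ is just the reverse of $m\to m-e_{i-1}+e_i$ and so is covered.
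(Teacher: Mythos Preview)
Your proof is correct and follows essentially the same approach as the paper: a direct verification of the detailed balance equations for the two admissible move types $m\leftrightarrow m+e_1$ and $m\leftrightarrow m-e_i+e_{i+1}$, using the Poisson ratios and the relation $\theta_{i+1}/\theta_i=\beta/(1-\beta)$. Your write-up is slightly more explicit than the paper's in isolating the cancellation of the normalising factor $c(m)=\beta\lambda+\norm{m}$ (so that detailed balance for $\pi$ reduces to $\tilde\pi(m)q(m,m')=\tilde\pi(m')q(m',m)$) and in spelling out why $\beta<1/2$ is needed for $\pi$ to be summable, but the argument is the same; note the small typo in your statement of detailed balance, where the second $p(m'|m)$ should read $p(m|m')$.
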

\begin{proof}
Let $\tilde \pi_{\infty}(m)$ be as in \eqref{pi} 
{and  
  $C^{-1}=
  \beta\lambda+\beta\lambda /(1-2\beta)
  $
be the normalizing constant appearing in \eqref{pi in intro}.} Then, for any $m\in \Z_{+}^{\infty}$,  we have
\begin{align*}
\pi(m+e_{1})&\,p(m\mid m+e_{1})
=C\tilde \pi_{\infty}(m+e_{1})(1-\beta)(m_{1}+1)\\
=&\,C\tilde \pi_{\infty}(m)\frac{\theta_{1}}{m_{1}+1}(1-\beta)(m_{1}+1)
=C\tilde \pi_{\infty}(m)\beta\lambda
=\pi(m)p(m+e_{1}\mid m)
\end{align*}
and for any $i\ge1$
\begin{align*}
\pi(m-e_{i}&\,+e_{i+1})p(m\mid m-e_{i}+e_{i+1})
=C\tilde \pi_{\infty}(m-e_{i}+e_{i+1})(1-\beta)(i+1)(m_{i+1}+1)\\
=&\,C\tilde \pi_{\infty}(m)\frac{im_{i}}{\theta_{i}}\frac{\theta_{i+1}}{(i+1)(m_{i+1}+1)}(1-\beta)(i+1)(m_{i+1}+1)\\
=&\,C\tilde \pi_{\infty}(m)\beta im_{i}
=\pi(m)p(m-e_{i}+e_{i+1}\mid m)
\end{align*}
yielding the result.
Finally, in view of the positive recurrence of the chain, which can be easily proved, $\pi(m)$ is also the unique invariant measure of $M$.
\end{proof}

With a similar argument to that used in Section \ref{sec: up and down chains} for the chains with maximal  allelic count, the invariant distribution  of B\&D-PU's admits representation as augmented vector of independent Poisson variables. Recalling that $\theta_i=(\beta/(1-\beta))^i\lambda$, define $J=j$ with probability proportional to $\theta_{j}$ for $j\ge1$ and proportional to $\beta\lambda$ for $j=0$. 
{One can easily check, using the geometric series $\sum_{i\geq 1}p^i=p/(1-p)$ for $0<p<1$, that}
\begin{equation}\label{eq:J}
  \P(J=0)
  =\frac{1}{2}\bigg(1-\frac{\beta}{1-\beta}\bigg), 
  \quad \quad 
  \P(J=j)
  =\frac{1}{2\beta}\bigg(1-\frac{\beta}{1-\beta}\bigg)
  \bigg(\frac{\beta}{1-\beta}\bigg)^{j},\quad j\ge1.
\end{equation}
This can be obtained from a geometric distribution of parameter $1-\beta/(1-\beta)$, by reallocating half of the mass assigned to 0 to the other support points, resulting in a tilting given by the factor $1/2\beta$.
Next, if $m=(m_1,m_2,\ldots)$ where the $m_{i}$'s are independent each with $\text{Po}(m_{i};\theta_{i}/i)$ distribution,
then 
\begin{equation}\label{eq:aug_Pois}
  m+e_J\sim\pi(m)
\end{equation}  
for $\pi(m)$ in \eqref{pi in intro}.
Note also that in the stationary regime, the random partition induced by B\&D-PUs has a random number of groups $K:=\sum_im_i$,
contrary to the number of groups induced by usual P\'olya urns, recovered here for $\beta\geq1/2$, which grows to infinity asymptotically as $\log h$ (\citealp{KH73}). 
Exploiting the representation \eqref{eq:J}--\eqref{eq:aug_Pois} and using the fact that $\sum_{i\geq 1}p^{i}/i=-\log(1-p)$ for $0<p<1$, one finds that, when $\beta<1/2$,
\begin{equation}\label{moments} K \overset{d}{=}K_\infty+\Indic(J\geq 1),\quad\quad 
  K_\infty\sim
  \mbox{Pois}(k; -\lambda\log(1-\beta/(1-\beta)))
\end{equation}
where $K_\infty$ corresponds to the sum of independent Poisson random variableis with parameters $\theta_i$ as in the l.h.s.~of \eqref{eq:aug_Pois}. 
This immediately yields the moments of $K$, for example
\begin{equation*}
\E(K)=\P(J=0)-\lambda\log(1-\beta/(1-\beta)).
\end{equation*} 


\section{Connection with a mixture of Ewens sampling formulas}
\label{sec: mix of ESFs}

We conclude by showing that the invariant measure of B\&D-PUs corresponds to a mixture of Ewens sampling formulas $\mbox{ESF}_n$ in \eqref{ESF} with a tilted Negative Binomial mixing measure on the sample size $n$. 
Let
\begin{equation}\label{NB pmf}
\mbox{NB}(n;r,p)
=\frac{\Gamma(r+n)}{n!\Gamma(r)}
  p^n(1-p)^r,\quad\quad  n=0,1,\ldots
\end{equation} 
be the Negative Binomial distribution with parameters $r>0$ and $p\in(0,1)$.

\begin{theorem}\label{thm: EFS mixture}
Let $\beta\in(0,1/2)$ and $M$ be a \emph{B\&D-PU} with transition probabilities \eqref{BD-PU transitions} and invariant distribution $\pi$ as in \eqref{pi in intro}. Then 
\begin{enumerate}[(i)]
\item
$N:=\sum_{i\ge1}im_{i}$ is a birth-and-death chain with invariant distribution
\begin{equation*}
  \mu(n)\propto (\beta\lambda+n)\,
  \mathrm{NB}(n;\lambda,\beta/(1-\beta)), \quad \quad n=0,1,\ldots;
\end{equation*} 
\item $\pi$ admits representation as mixture of Ewens sampling formulas
\begin{equation}\label{mixture of ESFs}
\pi(m)=\sum_{n\ge0}\mathrm{ESF}_{n}(\rho_{n}(m))\mu(n), \quad \quad m\in \Z_{+}^{\infty},
\end{equation} 
with $\rho_{n}$ as in \eqref{rho}. 
\end{enumerate}
\end{theorem}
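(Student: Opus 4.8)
The plan is to prove part (i) first, directly from the dynamics, and then deduce part (ii) as an essentially algebraic consequence. For part (i), I would start from the fact that $M$ is a Markov chain on $\Z_+^\infty$ with transitions \eqref{BD-PU transitions}, and observe that under the projection $n=\sum_{i\ge1}im_i$ the three types of moves act as: $n\mapsto n+1$ (birth), $n\mapsto n$ (an internal move $m-e_i+e_{i+1}$ that does not change $n$), and $n\mapsto n-1$ (death). Hence, conditionally on the current state $m$, the probability of $N$ increasing is $\beta\lambda/(\beta\lambda+\norm{m})$ and of decreasing is $(1-\beta)\norm{m}/(\beta\lambda+\norm{m})$, while the internal moves leave $N$ fixed. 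Because the weights depend on $m$ only through $\norm{m}=n$, the process $\{N(h)\}$ (watched only when it actually moves) is itself a birth-and-death chain; more directly, I would simply verify the detailed-balance equations $\mu(n)\,q(n\to n+1)=\mu(n+1)\,q(n+1\to n)$ for the proposed $\mu$, where $q$ denotes the jump rates of $N$. Using $\mathrm{NB}(n+1;\lambda,\beta/(1-\beta))/\mathrm{NB}(n;\lambda,\beta/(1-\beta))=\frac{\lambda+n}{n+1}\cdot\frac{\beta}{1-\beta}$ together with the birth/death weights $\beta\lambda/(n+1)$-ish versus $(1-\beta)$-ish (normalised consistently), the factors $(\beta\lambda+n)$ and $(\beta\lambda+n+1)$ slot in exactly to make detailed balance hold. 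Positive recurrence (already invoked elsewhere, and clear here since the death weight dominates for large $n$ when $\beta<1/2$) gives uniqueness.

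A cleaner route to (i), which I would actually adopt, is to use part (ii)'s decomposition in reverse: since $\pi$ has already been shown (Theorem \ref{thm: invariance delayed urn}) to be invariant, I would push $\pi$ forward under $m\mapsto\norm{m}=n$ and check that the resulting marginal of $n$ is exactly $\mu(n)\propto(\beta\lambda+n)\mathrm{NB}(n;\lambda,\beta/(1-\beta))$. This is a direct computation: from the augmented-Poisson representation \eqref{eq:J}--\eqref{eq:aug_Pois}, or by summing $\pi(m)$ over all $m$ with $\norm{m}=n$, one gets $\mu(n)=C(\beta\lambda+n)\sum_{\norm{m}=n}\prod_i\mathrm{Po}(m_i;\theta_i/i)$, and the inner sum is the probability that $\sum_i i\,\mathrm{Po}(\theta_i/i)=n$; plugging $\theta_i=\lambda(\beta/(1-\beta))^i$ and using $\sum_{i\ge1}\theta_i/i=-\lambda\log(1-\beta/(1-\beta))$ (already noted before \eqref{moments}) together with the compound-Poisson/negative-binomial identity (the law of $\sum_i i X_i$ with $X_i\sim\mathrm{Po}(\lambda p^i/i)$ is $\mathrm{NB}(\lambda,p)$, a standard fact, e.g. the ``logarithmic/negative-binomial'' thinning relation) identifies the inner sum as $\mathrm{NB}(n;\lambda,\beta/(1-\beta))$. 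That $\{N(h)\}$ is a birth-and-death chain is then the elementary observation about the projected dynamics made above.

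For part (ii), the identity \eqref{mixture of ESFs} is purely combinatorial once $\mu$ is known. Fix $m\in\Z_+^\infty$ with $\norm{m}=n_0$; the right-hand side collapses to the single term $n=n_0$, since $\mathrm{ESF}_n(\rho_n(m))$ is supported on $\{\sum_j jm_j=n\}$, so I must show $\pi(m)=\mathrm{ESF}_{n_0}(\rho_{n_0}(m))\mu(n_0)$. Expanding $\mathrm{ESF}_{n_0}$ from \eqref{ESF} and $\mu(n_0)$ from (i), and $\pi(m)$ from \eqref{pi in intro}, both sides are explicit products over $i$ of Poisson-type factors times a prefactor; the $\lambda$-power bookkeeping works because $\prod_i(\lambda/i)^{m_i}\cdot(\beta/(1-\beta))^{\sum_i i m_i}$ matches $\prod_i(\theta_i/i)^{m_i}$ with $\theta_i=\lambda(\beta/(1-\beta))^i$, the factorials $\prod_i m_i!$ and the $n_0!$ cancel against the $\Gamma$-ratios as in the classical Ewens/Dirichlet computation, and the normalising constants $C^{-1}=\beta\lambda+\beta\lambda/(1-2\beta)$ and the $\mu$-normaliser reconcile via $\sum_n(\beta\lambda+n)\mathrm{NB}(n;\lambda,\beta/(1-\beta))=\beta\lambda+\lambda\frac{\beta/(1-\beta)}{1-\beta/(1-\beta)}=\beta\lambda+\beta\lambda/(1-2\beta)$. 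I expect the main obstacle to be bookkeeping rather than conceptual: getting every prefactor — the $(\beta\lambda+n)$ weight, the two normalising constants, and the $\Gamma(\lambda+n)/\Gamma(\lambda)$ factor — to cancel cleanly, and making sure the compound-Poisson-to-negative-binomial identity used in (i) is stated with the exact parametrisation that matches \eqref{NB pmf}. Everything else is either already established in the excerpt or a routine manipulation of Poisson mass functions.
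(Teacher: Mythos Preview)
Your proposal contains a genuine error in the projected dynamics for part (i). You assert that the transition $m\mapsto m-e_i+e_{i+1}$ is ``an internal move that does not change $n$''. This is wrong: with $n=\norm{m}=\sum_{j\ge1}jm_j$ one has $\norm{m-e_i+e_{i+1}}=n-i+(i+1)=n+1$, since a block of size $i$ becoming size $i+1$ means one item has been added. There are in fact no holding moves for $N$; every transition of $M$ changes $n$ by exactly $\pm1$. Consequently the birth weight for $N$ is $\beta\lambda+\sum_{i\ge1}\beta\,im_i=\beta(\lambda+n)$, not $\beta\lambda$ as you write --- this is exactly the probability $b(\beta,m)$ in \eqref{bernoulli parameter}. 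With your stated birth rate $\beta\lambda/(\beta\lambda+n)$ the detailed-balance check against the proposed $\mu$ fails (the ratio $\mu(n+1)/\mu(n)$ comes out wrong by a factor $(\lambda+n)/\lambda$), so your first route does not go through.

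Your alternative route --- pushing $\pi$ forward under $m\mapsto\norm{m}$ and invoking the compound-Poisson/negative-binomial identity --- does compute the correct marginal, and is a legitimate (and slightly different) way to identify $\mu$; the paper instead writes down the correct birth/death rates $\beta(\lambda+n)$ and $(1-\beta)n$ and verifies detailed balance directly. But even in your alternative route you still need the correct description of the projected chain to justify that $N$ is Markov and hence that the pushforward of $\pi$ is its invariant law, so the gap above must be repaired in any case. Your plan for part (ii) --- collapse the sum to the single term $n=\norm{m}$ and match $\pi(m)$ against $\mathrm{ESF}_n(\rho_n(m))\mu(n)$ factor by factor --- is essentially the paper's argument and is fine.
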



Note that \eqref{mixture of ESFs} is well defined for $n=0$ if one interprets \eqref{ESF} as giving probability one to the empty vector $\rho_{0}(m)=\emptyset$, as one can easily verify that $\pi(0,0,\ldots)=\mu(0)$.

\begin{proof}
From \eqref{BD-PU transitions}, it is easily seen that $N$ is a birth-and-death chain with immigration, whose transitions 
$n\to n\pm1$ have probabilities $t(n\pm 1| n)$ proportional to $\beta(\lambda+n)$ and $(1-\beta)n$ respectively. 
The expected increment of $N$ is proportional to $\beta\lambda-(1-2\beta)n$, which is always positive for $\beta\geq 1/2$, yielding non stationarity. For $\beta<1/2$, using the fact that the expected value of the Negative Binomial distribution in \eqref{NB pmf} is $pr/(1-p)$, it is readily verified that 
\begin{equation}\label{stationary N}
\begin{aligned}
  \mu(n)
  &=\frac{\beta\lambda+n}
  {2\beta\lambda/(1-\beta/(1-\beta))}  
  \mbox{NB}(n;\lambda,\beta/(1-\beta))\\
  &=\frac{\beta\lambda+n}{2\beta\lambda}
  \frac{\lambda_{(n)}}{n!}
  \bigg(\frac{\beta}{1-\beta}\bigg)^{n}
  \bigg(1-\frac{\beta}{1-\beta}\bigg)^{\lambda+1}
\end{aligned}
\end{equation} 
where  $a_{(x)}=a(a+1)\cdots(a+x-1)  =\Gamma(a+x)/\Gamma(a)$ is the Pochhammer symbol.
The detailed balance condition for $N$ then reads
\begin{equation}\label{N reversib}
\begin{aligned}
\mu(n-1)&\,t(n| n-1)
=\frac{\beta\lambda+n-1}{2\beta\lambda}\frac{\lambda_{(n-1)}}{(n-1)!}\bigg(\frac{\beta}{1-\beta}\bigg)^{n-1}\bigg(1-\frac{\beta}{1-\beta}\bigg)^{\lambda+1}\frac{\beta(\lambda+n-1)}{\beta\lambda+n-1}\\
=&\,\frac{\beta\lambda+n}{2\beta\lambda}\frac{\lambda_{(n)}}{n!}\bigg(\frac{\beta}{1-\beta}\bigg)^{n}\bigg(1-\frac{\beta}{1-\beta}\bigg)^{\lambda+1}\frac{(1-\beta)n}{\beta\lambda+n}
=\mu(n)t(n-1| n),
\end{aligned}
\end{equation} 
hence $\mu$ is the reversible measure for $N$, yielding the first assertion. 
To prove \eqref{mixture of ESFs}, it suffices to show that $\pi(m)=\mbox{ESF}_n(\rho_{n}(m)) \mu(n)$
whenever $m\in \Z_{+}^{\infty}$ is such that $\sum_{i\geq 1}i m_i=n$, {for $\text{ESF}_{n}$ and $\mu(n)$ as in \eqref{ESF} and \eqref{stationary N}}. 
Let $p=\beta/(1-\beta)$ and
  $C^{-1}=
  \beta\lambda+\beta\lambda /(1-2\beta)
  =2\beta\lambda/(1-\beta/(1-\beta))
  $
be the {normalising constant} appearing both in \eqref{pi in intro} and in the first display of \eqref{stationary N}.
{Assuming that $\sum\nolimits_{i\geq 1}i m_i=n$}, we have
\begin{equation*}
\begin{aligned}
  \pi(m)
  =&\,C\Big(\beta\lambda+\sum\nolimits_{i=1}^nim_i\Big)
  \prod_{i\ge1}\frac{1}{m_{i}!}
  \bigg(\frac{\lambda p^i}{i}\bigg)^{m_{i}}
  \edr^{-\lambda p^{i}/i}\\
  =&\,C(\beta\lambda+n)
  p^n\exp\bigg\{-\lambda\sum_{i\ge1}\frac{p^i}{i}
  \bigg\}  
  \prod_{i=1}^n
  \bigg(\frac{\lambda}{i}\bigg)^{m_{i}}\frac{1}{m_{i}!}
  \\
  =&\,C(\beta\lambda+n)
  \bigg(\frac{\beta}{1-\beta}\bigg)^{n}
  \bigg(1-\frac{\beta}{1-\beta}\bigg)^{\lambda}\
  \prod_{i=1}^{n}\bigg(\frac{\lambda}{i}\bigg)^{m_{i}}
  \frac{1}{m_{i}!}
\end{aligned}
\end{equation*} 
where in the third equality we have used the fact that $\sum_{i\geq 1}p^{i}/i=-\log(1-p)$. Multiplying and dividing by $\lambda_{(n)}/n!$ now gives the desired condition.
\end{proof}

For what concerns the sampling process associated to B\&D-PUs, say $X$, which alternates sampling of observations from \eqref{Polya urn predictive} to removals of uniformly chosen observations, this evolves in $E=\emptyset\cup (\cup_{n\ge1}\X^{n})$ according to the following transition probabilities
\begin{equation}\label{X transitions}
q(x'|x)\propto
\left\{
\begin{array}{lll}
\beta\lambda, \quad & \text{if } x'=(x_{1},\ldots,x_{N},y),\quad  y\sim P_{0}, \\
\beta, \quad & \text{if }x'=(x_{1},\ldots,x_{j},\ldots,x_{N},x_{j}),\quad  1\le j\le N,\\
(1-\beta), \quad & \text{if }x'=(x_{1},\ldots,x_{j-1},x_{j+1},\ldots,x_{N}), \quad 1\le j\le N,
\end{array}
\right.
\end{equation} 
with normalising constant $\beta\lambda+N$. 
It is immediate from Theorem \ref{thm: EFS mixture} to see that this particle process is also stationary when $\beta<1/2$ with invariant measure given by the mixture of P\'olya urn schemes
\begin{equation}\label{stationary for N}\nonumber
\text{PU}(\d x):=\sum_{n\ge0}\text{PU}_{n}(\d x)\mu(n),
\end{equation} 
where $\mu$ is as in Theorem \ref{thm: EFS mixture}, $\text{PU}_{n}(\d x)$ represents the joint law of $X_{1},\ldots,X_{N}$ drawn from the P\'olya urn scheme \eqref{Polya urn predictive}, conditional on $N=n$, and $\text{PU}_{0}$ assigns probability one to the empty set.

\section*{Acknowledgements}

The first two authors are supported by the European Research Council (ERC) through StG ``N-BNP'' 306406.

\section*{Appendix}\label{sec: proofs}
\phantomsection\addcontentsline{toc}{section}{Appendix}

\renewcommand{\thesection}{\Alph{section}}
\setcounter{section}{1}
\setcounter{theorem}{0}

\begin{lemma}\label{lem: system}
Let $\lambda,\mu>0$. Then the system
\begin{equation*}
\begin{cases}
  \theta_{1}-(1-\beta)\theta_{2}= \beta\lambda \\
  \theta_{i}-\beta\theta_{i-1}-(1-\beta)\theta_{i+1}=0,\quad i=2,\ldots,L-1,   \\
  \theta_{L}-\beta\theta_{L-1}=(1-\beta)\mu.
  \end{cases}
\end{equation*} 
has solution 
\begin{equation}\label{eq: mixed thetas}
  \theta_{i}
  =\begin{cases}
  \displaystyle
  \beta^i\frac{(1-\beta)^{L-i+1}-\beta^{L-i+1}}{(1-\beta)^{L+1}-\beta^{L+1}}\lambda
  +(1-\beta)^{L-i+1}\frac{(1-\beta)^i-\beta^i}{(1-\beta)^{L+1}-\beta^{L+1}}\mu,
  &\beta\neq 1/2,\\[3mm]
  \displaystyle
  \frac{L-i+1}{L+1}\lambda+\frac{i}{L+1}\mu,
  &\beta=1/2.
  \end{cases}
\end{equation}  
Moreover
\begin{equation}\label{eq:sum_thetaL}
  \sum_{i=1}^L \theta_i
  =\begin{cases}
\displaystyle   \frac{(1-\beta)^{L+1}\mu-\beta^{L+1}\lambda}{(1-\beta)^{L+1}-\beta^{L+1}}L+
  \frac{(1-\beta)\beta}{1-2\beta}\frac{(1-\beta)^L-\beta^L}
  {{(1-\beta)^{L+1}-\beta^{L+1}}}(\lambda-\mu),&
  \beta\neq 1/2,\\[3mm]
\displaystyle   \frac{\lambda+\mu}{2}L,&
  \beta=1/2.
  \end{cases}
\end{equation}  
\end{lemma}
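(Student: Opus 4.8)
The plan is to reduce the tridiagonal system to a scalar discrete boundary value problem with a probabilistic meaning, solve that explicitly, and then obtain \eqref{eq:sum_thetaL} by summing a geometric series.

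First I would notice that the right-hand side of the system depends linearly on $(\lambda,\mu)$, and that for $\lambda=\mu=1$ the constant sequence $\theta_i\equiv1$ is a solution (the first, middle and last equations reduce respectively to $1-(1-\beta)=\beta$, $1-\beta-(1-\beta)=0$ and $1-\beta=1-\beta$). Hence one may look for a solution of the form $\theta_i=w_i\lambda+(1-w_i)\mu$ with $(w_i)_{i=1}^L$ depending only on $\beta,L$. Substituting this ansatz, the middle equations become $w_i=\beta w_{i-1}+(1-\beta)w_{i+1}$, while the first and last become $w_1=\beta+(1-\beta)w_2$ and $w_L=\beta w_{L-1}$; equivalently, after extending the sequence by $w_0:=1$ and $w_{L+1}:=0$, the $w_i$ satisfy $w_i=\beta w_{i-1}+(1-\beta)w_{i+1}$ for $1\le i\le L$ with those boundary values. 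This is exactly the gambler's ruin recursion: $w_i$ is the probability that a nearest-neighbour random walk on $\{0,1,\dots,L+1\}$ that steps left with probability $\beta$ and right with probability $1-\beta$ reaches $0$ before $L+1$, started from $i$. Its classical closed form — with the symmetric case $\beta=1/2$ giving the linear interpolant $w_i=(L-i+1)/(L+1)$ — is precisely the pair of coefficients displayed in \eqref{eq: mixed thetas}. (Alternatively, one can simply insert \eqref{eq: mixed thetas} into the three groups of equations and verify them directly by telescoping the geometric terms.) Uniqueness of the solution, which is what Theorem \ref{thm: up-down chain} appeals to, follows because the $L\times L$ coefficient matrix is nonsingular: for $\beta<1$ it is irreducible and weakly diagonally dominant with strict dominance in its first row, and for $\beta=1$ it is lower triangular with unit diagonal.

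It then remains to evaluate $\sum_{i=1}^L\theta_i=\mu L+(\lambda-\mu)\sum_{i=1}^L w_i$. For $\beta\ne1/2$, writing $r:=\beta/(1-\beta)$ and $w_i=(r^i-r^{L+1})/(1-r^{L+1})$,
\[
  \sum_{i=1}^L w_i=\frac{1}{1-r^{L+1}}\Big(\frac{r(1-r^L)}{1-r}-L\,r^{L+1}\Big),
\]
and substituting $r/(1-r)=\beta/(1-2\beta)$, multiplying numerator and denominator by $(1-\beta)^{L+1}$, and using $(1-\beta)^{L+1}(1-r^L)=(1-\beta)[(1-\beta)^L-\beta^L]$, one arrives at the two terms of \eqref{eq:sum_thetaL}. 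For $\beta=1/2$ the sum collapses to $\sum_{i=1}^L w_i=\frac1{L+1}\sum_{i=1}^L(L+1-i)=L/2$, whence $\sum_{i=1}^L\theta_i=(\lambda+\mu)L/2$ (also the $\beta\to1/2$ limit of the general formula). I expect the only genuine effort to lie in this final bookkeeping — tracking the powers of $\beta$ and $1-\beta$ and the telescoping that turns $(1-r^L)/(1-r^{L+1})$ into $(1-\beta)[(1-\beta)^L-\beta^L]/[(1-\beta)^{L+1}-\beta^{L+1}]$ — together with a careful handling of the degenerate case $\beta=1/2$; once the gambler's ruin reduction is in place, the structural part of the proof is immediate.
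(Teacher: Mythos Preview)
Your argument is correct, and it takes a genuinely different route from the paper's. The paper writes the system as $A\bm{\theta}=\bm{\beta}$ with $A$ tridiagonal, derives the recursion $d_l=d_{l-1}-\beta(1-\beta)d_{l-2}$ for the leading principal minors of $A$, solves this second-order linear recurrence explicitly to get $d_l=((1-\beta)^{l+1}-\beta^{l+1})/(1-2\beta)$ (or $(l+1)2^{-l}$ when $\beta=1/2$), and then computes the relevant entries $a_{i1},a_{iL}$ of $A^{-1}$ by Cramer's rule and block-determinant formulas for the cofactors; the convex-combination structure $\theta_i=w_i\lambda+(1-w_i)\mu$ is observed only \emph{a posteriori}, via $A\bm{1}=(\beta,0,\ldots,0,1-\beta)^{\top}$. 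Your approach instead imposes that structure \emph{a priori} by noting the particular solution $\theta_i\equiv1$ for $\lambda=\mu=1$, reduces to the single discrete harmonic problem for $w_i$ with absorbing boundaries $w_0=1$, $w_{L+1}=0$, and recognises it as gambler's ruin. This is cleaner and shorter: it bypasses all the determinant and cofactor computations, and the uniqueness argument via irreducible weak diagonal dominance is more transparent than showing $d_L\neq0$ from the recursion. The paper's approach, on the other hand, is entirely self-contained and yields the minors $d_l$ as a by-product, which could be useful if one wanted finer information about $A^{-1}$. For the sum \eqref{eq:sum_thetaL} both proofs amount to the same geometric-series bookkeeping, though your decomposition $\sum\theta_i=\mu L+(\lambda-\mu)\sum w_i$ organises it a bit more neatly than the paper's direct expansion.
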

\begin{proof}
The system can be written $A\, {\bm x}={\bs\beta}$, 
where ${\bs\beta}=\beta\lambda e_1+(1-\beta)\mu e_L$ and $A$ is a tridiagonal matrix with entries $-(1-\beta),1,-\beta$ respectively above, on and below the main diagonal. 
Let $d_L=\det(A)$ where subscript $L$ refers to the dimension of $A$. Then $d_l$, $1\leq l\leq L$, corresponds to the determinant of the $l\times l$ submatrix made of the first $l$ rows and columns of $A$, i.e. $det(A)$ when $L=l$.
By using expansion by the first column, it is easy to check that $d_l$ satisfies the recursive relation
\begin{equation}\label{eq:recursive}
  d_l=d_{l-1}-\beta(1-\beta)d_{l-2},\quad l\geq 2
\end{equation}  
with initial values $d_1=1,d_2=1-\beta(1-\beta)$. Consider the associated second order difference equation
\begin{equation}\label{eq:diff_equ}
  x_{t+2}-x_{t+1}+\beta(1-\beta)x_t,\quad t\geq 2
\end{equation}
i.e. \eqref{eq:recursive} with $d_t=x_{t-1}$. It has characteristic equation
  $m^2-m+\beta(1-\beta)=0$
with roots $m_1=b$ and $m_2=(1-\beta)$. When $\beta\neq 1/2$, \eqref{eq:diff_equ} has general solution given by
  $$x_t=c_1 \beta^t+c_2(1-\beta)^t,\quad c_1,c_2\in\R$$
so that by using the initial conditions $x_0=0$ and $x_1=1-\beta(1-\beta)$, 
we find
  $c_2=(1-\beta)^2/(1-2\beta),\ c_1=1-c_2=-\beta^2/(1-2\beta)$
and, in turn,
\begin{equation}\label{eq:d_L}
  d_l=((1-\beta)^{l+1}-\beta^{l+1})/(1-2\beta)
\end{equation}  
When $\beta=1/2$, \eqref{eq:diff_equ} has general solution given by  $x_t=(c_1+c_2t)(1/2)^{t},\ c_1,c_2\in\R$
where $1/2$ is the common root of the characteristic equation. By using the initial conditions $x_0=0$ and $x_1=1-\beta(1-\beta)=1-1/4$, 
we find
  $c_1=1;\ c_2=1/2$,
therefore
\begin{equation}\label{eq:d_L2}
  d_l=\left(1+\frac{l-1}{2}\right)2^{-(l-1)}
  =(l+1)2^{-l}.
\end{equation}  
Note that $d_l\neq0$ for any $l$ since the constant solution of \eqref{eq:diff_equ}, $x_t=0$, is ruled out by the initial condition $x_0=1$. Moreover, $d_{l}>\beta(1-\beta)d_{l-1}$, so that $d_{l}>0$ for any $l$, cfr. \eqref{eq:recursive}.
In particular,
  $d_l>\beta^{l-1}(1-\beta)^{l-1},\ l\geq 2$.
Since for $\beta\neq 0,1$ the absolute value of the roots of the characteristic equation is $<1$, the constant solution $x_t=0$ is stable, meaning that $x_t\to 0$ as $t\to \infty$, i.e.  
  $\lim_{l\to\infty}d_l= 0$. When $\beta=0,1$, \eqref{eq:diff_equ} has constant solution $x_t=1$, hence $d_l=1$ for any $l$.

Since now $d_L\neq 0$, the solution is unique and given by 
  $$\bm\theta=A^{-1}{\bs\beta}
  =\beta A^{-1}e_1\lambda+(1-\beta)A^{-1}e_L\mu$$
and, in particular, 
\begin{equation}\label{eq:thetai}
  \theta_i=\beta a_{i1}\lambda+(1-\beta)a_{iL}\mu,\quad
  i=1,\ldots,L
\end{equation}
where $a_{ij}$ is the $(ij)$th entry of $A^{-1}$. By using Cramer's method,
   $$A^{-1}=\frac{1}{\det A}\left( (-1)^{i+j}\det A_{ij} \right)^T$$
where $A_{ij}$ is the $(L-1)\times(L-1)$ matrix obtained by deletion of the $i$th row and $j$th column of $A$. 
 Hence,
  $$a_{i1}=d_L^{-1}(-1)^{1+i}\det A_{1i},\quad
  a_{iL}=d_L^{-1}(-1)^{L+i}\det A_{Li}$$
Consider $a_{iL}$ first. It is easy to see that $A_{L1}$ is lower triangular with $-(1-\beta)$ in the diagonal, hence $\det(A_{L1})=[-(1-\beta)]^{L-1}$. For $1<i<L$, $A_{Li}$ is a $(L-1)\times(L-1)$ matrix that can be partitioned as
  $$A_{Li}=\begin{bmatrix}
  B_{11}&B_{12}\\B_{21}&B_{22}\end{bmatrix}$$
where $B_{11}$ correspond to $A$ with $i-1$ rows, $B_{12}$ is a $(i-1)\times(L-i)$ made of all zero and $B_{22}$ is a lower triangular matrix with $L-i$ rows and $-(1-\beta)$ in the diagonal. In particular, $\det(B_{11})=d_{i-1}$ and $\det(B_{22})=[-(1-\beta)]^{L-i}$. By using the formula for the determinant of partitioned matrices,
  $$\det\begin{pmatrix}\begin{bmatrix}
  B_{11}&B_{12}\\B_{21}&B_{22}\end{bmatrix}\end{pmatrix}
  =\det(B_{11})\det(B_{22}-B_{21}B_{11}^{-1}B_{12})$$
we find that
  $$\det A_{Li}=\det(B_{11})\det(B_{22})
  =d_{i-1}(-1)^{L-i}(1-\beta)^{L-i}$$
so that
\begin{align*}
  a_{iL}
=d_L^{-1}(-1)^{L+i}d_{i-1}(-1)^{L-i}(1-\beta)^{L-i}
=d_L^{-1}(-1)^{2L}d_{i-1}(1-\beta)^{L-i}
  =d_L^{-1}d_{i-1}(1-\beta)^{L-i}
\end{align*}
Finally, $A_{LL}$ corresponds to $A$ with $L-1$ rows, hence $\det(A_{LL})=d_{L-1}$. Summing up, by using the convention $d_0=1$, 
\begin{equation}\label{eq:aiL}
  a_{iL}=d_L^{-1}(1-\beta)^{L-i}d_{i-1},\quad
  i=1,\ldots,L
\end{equation}  
As for $a_{i1}$, we can exploit a certain symmetry of $A$: using the notation $A=A_{\beta}$ ,
  $\left[A_{\beta}\right]^T=A_{1-\beta}$,
so that 
  $A_\beta^{-1}=(A_{1-\beta}^{-1})^T$  
to find that
\begin{equation}\label{eq:ai1}
  a_{i1}=d_L^{-1}\beta^{i-1}d_{L-i},\quad
  i=1,\ldots,L.
\end{equation}  
Plugging in \eqref{eq:aiL} and \eqref{eq:ai1} into \eqref{eq:thetai}, one obtains
\begin{align*}
  \theta_{i}
  &=\frac{\beta^{i}d_{L-i}}{d_{L}}\lambda
  +\frac{(1-\beta)^{L-i+1}d_{i-1}}{d_{L}}\mu\end{align*} 
and the thesis follows by using \eqref{eq:d_L} and \eqref{eq:d_L2}. By direct calculation, one can check that $\theta_i$ is a convex linear combination of $\lambda$ and $\mu$. Alternatively, one can use directly equation \eqref{eq:thetai}. Let  ${\bm 1}=e_1+\ldots+e_L=(1,\ldots,1)$. Since
  $A{\bm 1}=(\beta,0,\dots,0,(1-\beta))$, we have
  $\beta A^{-1}e_1+(1-\beta)A^{-1}e_L={\bm 1}$,
that is
  $\beta a_{i1}+(1-\beta)a_{iL}=1$
for any $1\leq i\leq L$. 
Finally, $\beta a_{i1},(1-\beta)a_{iL}\geq 0$ as a simple calculation reveals. This completes the proof.

As for \eqref{eq:sum_thetaL}, the result for $\beta=1/2$ is straightforward. When $\beta\neq 1/2$, it is convenient to write 
  $$\theta_{i}^{(L)}=
  \frac{(1-\beta)^{L+1}\left(\frac{\beta}{1-\beta}\right)^i
  -\beta^{L+1}}{(1-\beta)^{L+1}-\beta^{L+1}}\lambda
  +\frac{(1-\beta)^{L+1}}
  {(1-\beta)^{L+1}-\beta^{L+1}}\left[1-\left(
  \frac{\beta}{1-\beta}\right)^i\right]\mu$$
We have
\begin{align*}
  \sum_{i=1}^L \theta_i^{(L)}
  &=\frac{1}{(1-\beta)^{L+1}-\beta^{L+1}}
  \left\{\left[(1-\beta)^{L+1}\sum_{i=1}^L\left(\frac{\beta}{1-\beta}\right)^i
  -L\beta^{L+1}\right]\lambda\right.\\
  &\left.+\left[L(1-\beta)^{L+1}-(1-\beta)^{L+1}\sum_{i=1}^L\left(\frac{\beta}{1-\beta}\right)^i
  \right]\mu\right\}\\
  &=\frac{(1-\beta)^{L+1}\mu-\beta^{L+1}\lambda}{(1-\beta)^{L+1}-\beta^{L+1}}L
  +\frac{(1-\beta)^{L+1}}{(1-\beta)^{L+1}-\beta^{L+1}}
  \sum_{i=1}^L\left(\frac{\beta}{1-\beta}\right)^i(\lambda-\mu)
\end{align*}
The result follows by the formula of the sum of the first $L$ terms of a geometric series,
  $$\sum_{i=1}^L\left(\frac{\beta}{1-\beta}\right)^i
  =\frac{\beta}{1-2\beta}\frac{(1-\beta)^L-\beta^L}{(1-\beta)^L}.$$
  The simplification \eqref{theta central} can be derived from \eqref{eq: mixed thetas} by using $x^2-y^2=(x-y)(x+y)$.
\end{proof}

\begin{lemma}\label{lemma:monotone}
Let $\theta_{i}^{(L)}$ be defined as in  \eqref{eq: mixed thetas}
with $\mu=\mu_L$ and $\{\mu_L\}_{L\geq 1}$ a decreasing sequence of positive real numbers such that $\mu_L\to 0$ as $L\to\infty$. Then
  $\theta_i^{(L)}\geq\theta_i^{L-1}$
for $L$ large enough and any $i=1,\ldots,L$.
\end{lemma}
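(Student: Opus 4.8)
The plan is to work directly from the closed form in \eqref{eq: mixed thetas}, specialised to $\mu=\mu_L$, and to separate the two summands. Write $\theta_i^{(L)}=\lambda\,a_i(L)+\mu_L\,b_i(L)$, where, for $\beta\neq 1/2$,
\[
a_i(L)=\beta^i\,\frac{(1-\beta)^{L-i+1}-\beta^{L-i+1}}{(1-\beta)^{L+1}-\beta^{L+1}},
\qquad
b_i(L)=(1-\beta)^{L-i+1}\,\frac{(1-\beta)^{i}-\beta^{i}}{(1-\beta)^{L+1}-\beta^{L+1}},
\]
and the analogous affine-in-$i$ expressions for $\beta=1/2$. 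Since $\mu_L\downarrow 0$, the second summand $\mu_L b_i(L)$ is $O(\mu_L)$ uniformly in $i$ (note $0\le b_i(L)\le 1$, as it is a convex-combination weight), so it suffices to show that the first summand $\lambda\,a_i(L)$ is strictly increasing in $L$ by a margin bounded below, uniformly over $i\le L$, for $L$ large; the vanishing $\mu_L$-term cannot overturn a uniformly positive gap. Concretely, I would prove
\[
a_i(L)-a_i(L-1)\ \ge\ c_L,\qquad \mu_L\bigl(b_i(L-1)+b_i(L)\bigr)\le 2\mu_L,
\]
and then check $\lambda c_L \ge 2\mu_L$ for $L$ large, which holds because $a_i(L)\to(\beta/(1-\beta))^i$ from below at a geometric rate while $\mu_L\to 0$ — so the increments $a_i(L)-a_i(L-1)$ are comparable to $(\beta/(1-\beta))^i$ times a quantity bounded away from $0$ for small $i$, and for the only problematic range (large $i$ close to $L$) one argues separately, see below.

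The cleanest route to the monotonicity of $a_i(L)$ avoids the explicit quotient and instead uses the representation from Lemma \ref{lem: system}: $a_i(L)=\beta^{\,i-1} d_{L-i}/d_L$, with $d_l=\bigl((1-\beta)^{l+1}-\beta^{l+1}\bigr)/(1-2\beta)$ for $\beta\neq1/2$ (and $d_l=(l+1)2^{-l}$ for $\beta=1/2$). Thus
\[
\frac{a_i(L)}{a_i(L-1)}=\frac{d_{L-i}}{d_{L-i-1}}\cdot\frac{d_{L-1}}{d_L},
\]
and, writing $r:=\beta/(1-\beta)\in(0,1)$ w.l.o.g.\ $\beta<1/2$, one computes $d_{l}/d_{l-1}=(1-\beta)\,(1-r^{\,l+1})/(1-r^{\,l})$, which is \emph{increasing} in $l$ and tends to $1-\beta$. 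Hence $d_{L-i}/d_{L-i-1}\ge d_{L-1}/d_{L-2}\ge\cdots$ is not immediately what is wanted; rather, the product telescopes only partially, so I would instead directly show $a_i(L)\ge a_i(L-1)$ by cross-multiplying and reducing to the inequality
\[
\bigl((1-\beta)^{L-i+1}-\beta^{L-i+1}\bigr)\bigl((1-\beta)^{L}-\beta^{L}\bigr)
\ \ge\
\bigl((1-\beta)^{L-i}-\beta^{L-i}\bigr)\bigl((1-\beta)^{L+1}-\beta^{L+1}\bigr),
\]
which, after dividing by $(1-\beta)^{2L-i+1}$ and setting $r=\beta/(1-\beta)$, becomes $(1-r^{L-i+1})(1-r^{L})\ge(1-r^{L-i})(1-r^{L+1})$, i.e.\ $r^{L-i}+r^{L+1}\ge r^{L-i+1}+r^{L}$, i.e.\ $(r^{L-i}-r^{L})(1-r)\ge 0$ — true since $0<r<1$ and $L-i<L$. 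This gives $a_i(L)$ \emph{non-decreasing} in $L$ for every fixed $i$; a strict version, with a quantitative lower bound on the increment, is obtained by keeping track of the positive quantity $(r^{L-i}-r^{L})(1-r)$ after dividing back out, yielding
\[
a_i(L)-a_i(L-1)\ \gtrsim\ \frac{(1-r)(r^{L-i}-r^{L})(1-\beta)^{2L-i+1}}{\bigl((1-\beta)^{L}-\beta^{L}\bigr)\bigl((1-\beta)^{L+1}-\beta^{L+1}\bigr)}\ \asymp\ (1-r)\,r^{L-i}(1-r^{i})\cdot(1-\beta),
\]
uniformly in $i$.

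The main obstacle is the uniformity over $i=1,\dots,L$ of the comparison between the strictly positive increment $\lambda\bigl(a_i(L)-a_i(L-1)\bigr)$ and the perturbation $\mu_L\bigl(b_i(L-1)+b_i(L)\bigr)$: when $i$ is close to $L$ the increment in $a_i$ becomes tiny (of order $r^{L-i}$, hence not bounded below by a constant), so one cannot simply say "constant gap beats $o(1)$." The resolution is that in that regime $b_i(L)$ is correspondingly small too: $b_i(L)\asymp(1-\beta)^{L-i+1}(1-r^i)\asymp (1-\beta)^{L-i}$, and more importantly the \emph{whole} parameter $\theta_i^{(L)}=\lambda a_i(L)+\mu_L b_i(L)$ can be compared monotonically by showing, in the same cross-multiplication style, that the $\mu_L b_i$-part is itself eventually monotone increasing once $\mu_L$ decreases — here one uses $b_i(L)=(1-\beta)^{L-i}d_{i-1}/d_L$, which is \emph{decreasing} in $L$, so the sign of $\mu_L b_i(L-1)-\mu_{L}b_i(L)\le \mu_{L-1}b_i(L-1)-\mu_L b_i(L)$ need not be handled; instead I combine the two pieces: $\theta_i^{(L)}-\theta_i^{(L-1)} = \lambda(a_i(L)-a_i(L-1)) - (\mu_{L-1}b_i(L-1)-\mu_L b_i(L))$, and bound the second difference by $\mu_{L-1}\bigl(b_i(L-1)-b_i(L)\bigr)+(\mu_{L-1}-\mu_L)b_i(L)$; both terms are $O(\mu_{L-1})$ uniformly, with the right $i$-decay to match $a_i(L)-a_i(L-1)$. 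So the clean statement to aim for is: there is $L_0$ (depending only on $\beta$ and the sequence $\{\mu_L\}$, via how fast it decays) such that for $L\ge L_0$ the dominant term $\lambda(a_i(L)-a_i(L-1))$ — which carries the factor $r^{L-i}(1-r^i)$ — exceeds the $\mu$-perturbation — which carries the factor $(1-\beta)^{L-i}\le r^{L-i}$ times $\mu_{L-1}\to 0$ — for every $i$, whence $\theta_i^{(L)}\ge\theta_i^{(L-1)}$. The case $\beta=1/2$ is genuinely easier: $\theta_i^{(L)}=\frac{L-i+1}{L+1}\lambda+\frac{i}{L+1}\mu_L$, and $\frac{L-i+1}{L+1}$ is increasing in $L$ with increment $\frac{i}{L(L+1)}$, while $\frac{i}{L+1}\mu_L\to 0$, so the same $O(1/L)$-beats-$o(1/L)$ bookkeeping (using $\mu_L\to0$) finishes it, and I would dispatch it first as a warm-up.
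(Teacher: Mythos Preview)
Your decomposition $\theta_i^{(L)}=\lambda\,a_i(L)+\mu_L\,b_i(L)$ and the cross-multiplication argument reducing monotonicity of $a_i(L)$ to $(r^{L-i}-r^L)(1-r)\ge 0$ are correct; the paper instead puts the full difference $\theta_i^{(L)}-\theta_i^{(L-1)}$ over a common denominator and factors it in one stroke. The gap is in your perturbation step. You assert $a_i(L)-a_i(L-1)\asymp (1-r)\,r^{L-i}(1-r^i)$, but if you carry the division through (rather than stop at the cross-multiplied numerator) you obtain
\[
a_i(L)-a_i(L-1)=\frac{(1-r)\,r^{L}(1-r^i)}{(1-r^{L+1})(1-r^L)}\ \asymp\ (1-r)\,r^{L}(1-r^i),
\]
which carries the factor $r^L$, not $r^{L-i}$. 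For $\beta<1/2$ this increment is therefore uniformly $O(r^L)$, exponentially small, whereas $b_i(L-1)=(1-r^i)/(1-r^L)$ is bounded away from zero for every $i\ge 1$. The exact identity
\[
\theta_i^{(L)}-\theta_i^{(L-1)}=(\lambda-\mu_L)\bigl(a_i(L)-a_i(L-1)\bigr)+(\mu_L-\mu_{L-1})\,b_i(L-1)
\]
then shows that nonnegativity would force $\mu_{L-1}-\mu_L=O(r^L)$, far stronger than $\mu_L\downarrow 0$. Indeed, with $\mu_L=1/L$ and any $\beta<1/2$ one gets $\theta_1^{(L)}\approx r+(1-r)/L$, strictly \emph{decreasing} for all large $L$; so the statement itself fails in this regime and no refinement of your perturbation bookkeeping can close the gap.

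The paper's own proof runs into the same obstruction in disguise: its factored expression for the difference is correct, but the sign analysis asserts $\mu_L-\mu_{L-1}\ge 0$ for a \emph{decreasing} sequence, which is backwards; with the sign corrected, the bracket $(1-2\beta)(\lambda-\mu_L)+[(1-\beta)p^{-L}-\beta](\mu_L-\mu_{L-1})$ need not be nonnegative when $\beta<1/2$. For $\beta>1/2$ both your route and the paper's do go through, since there $b_i(L)\to 0$ at a rate comparable to the $a_i$-increment; for $\beta=1/2$ one needs in addition that $L(\mu_{L-1}-\mu_L)\to 0$, which $\mu_L\downarrow 0$ alone does not guarantee.
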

\begin{proof}
Consider the case of $\beta\neq 1/2$ and, as a short hand notation, let $p=\beta/(1-\beta)$ and $c=[(1-\beta)^{L+1}-\beta^{L+1}][(1-\beta)^{L}-\beta^L]$. We have
\begin{align*}
  \theta_i^{(L)}-\theta_i^{(L-1)}
  =&\,\left\{
  \frac{(1-\beta)^{L+1}p^i-\beta^{L+1}}{(1-\beta)^{L+1}-\beta^{L+1}}
  -\frac{(1-\beta)^{L}p^i-\beta^{L}}{(1-\beta)^{L}-\beta^{L}}
  \right\}\lambda\\
  &+\left\{
  \frac{(1-\beta)^{L+1}}{(1-\beta)^{L+1}-\beta^{L+1}}\mu_L
  -\frac{(1-\beta)^{L}}{(1-\beta)^{L}-\beta^{L}}\mu_{L-1}
  \right\}(1-p^i)\\
  =&\,\frac{(1-\beta)^L\beta^L}{c}
  \left\{-(1-\beta)p^i-\beta+1-\beta+\beta p^i
  \right\}\lambda\\
  &+\frac{1}{c}
  \left\{
  (1-\beta)^{2L+1}(\mu_L-\mu_{L-1})
  -(1-\beta)^L\beta^L\left[(1-\beta)\mu_L-\beta \mu_{L-1}\right]
  \right\}(1-p^i)
  \\
  =&\,\frac{(1-\beta)^L\beta^L}{c}
  (1-2\beta)(1-p^i)\lambda
  +\frac{1}{c}
  \left\{
  (1-\beta)^{2L+1}(\mu_L-\mu_{L-1})
  \phantom{\frac{1}{2}}\right.\\
  &\left.\qquad\qquad
  -(1-\beta)^L\beta^L(1-2\beta)\left[\mu_L+\frac{\beta}{1-2\beta}(\mu_L-\mu_{L-1} \right]
  \right\}(1-p^i)
  \\
  =&\,\frac{(1-\beta)^L\beta^L(1-p^i)}
  {c}
  \left\{
  (1-2\beta)(\lambda-\mu_L)+\left[
  (1-\beta)p^{-L}-\beta \right](\mu_L-\mu_{L-1})
  \right\}.
\end{align*}
Note that $c\geq0$ for any $\beta\neq1/2$. When $\beta<1/2$, $1-p^i\geq0$, $(1-2\beta)\geq 0$ and $\lambda\geq \mu_L$ for $L$ large since $\mu_L\to 0$. As for the second term $\left[(1-\beta)p^{-L}-\beta\right](\mu_L-\mu_{L-1})$ in curly brackets,  $(1-\beta)p^{-L}-\beta\uparrow\infty$ and $\mu_L-\mu_{L-1}\geq 0$ as $L$ gets large when $\mu_L$ is decreasing. When $\beta>1/2$, $1-p^i\leq0$, $(1-2\beta)\leq 0$ and $\lambda\geq \mu_L$ for $L$ large since $\mu_L\to 0$. Also $(1-\beta)p^{-L}-\beta\downarrow -\beta$ and $\mu_L-\mu_{L-1}\geq 0$ given the monotonicity of $\{\mu_L\}_{L\geq 1}$. So also in this case $\theta_i^{(L)}-\theta_i^{(L-1)}\geq0$.
When $\beta=1/2$,
\begin{align*}
  \theta_i^{(L)}-\theta_i^{(L-1)}
  &=\left\{\frac{L-i+1}{L+1}-\frac{L-i}{L}\right\}\lambda
  +\left\{\frac{i}{L+1}\mu_L-\frac{i}{L}\mu_{L-1}\right\}\\
  &=\frac{1}{L(L+1)}\left[i(\lambda-\mu_{L-1})+iL(\mu_L-\mu_{L-1})\right]
  \geq 0
\end{align*}
for $L$ large since $\lambda\geq \mu_{L-1}$ and $\mu_L-\mu_{L-1}\geq0$.
\end{proof}

\begin{proposition}\label{prop: L chain embedded 2}
Let $\MLc$ have transitions \eqref{delayed urn modified}, and $\MLud$ as in Definition \eqref{up-down transitions explicit} with $\mu=\mu_{L}$. For any $L<\infty$, $\MLud$ is embedded in $\MLc$ at the Markov times $\sigma_{k}$ in the sense that the times $\{\sigma_{k},n\ge1\}$ occur infinitely often in $\N$  and
\begin{equation*}
  \P\Big(\rho_L\Big(\MLc(\sigma_{k})\Big)
  =m'\Big| \rho_L\Big(\MLc(\sigma_{k-1})\Big)=m\Big)=
  p_{L}(m'|m).
\end{equation*} 
for all $k\in\N$ and $m,m'\in\Z_+^L$.
\end{proposition}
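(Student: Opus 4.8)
The plan is to show that the embedding works by a standard strong Markov / uniformization argument. First I would observe that $\MLc$ on $\Z_+^\infty$ has, at any state $m$, two types of transitions: those that change $\rho_L(m)=(m_1,\ldots,m_L)$ (namely $m\mapsto m+e_1$, $m\mapsto m-e_i+e_{i+1}$ and $m\mapsto m-e_i+e_{i-1}$ for $i\le L$, together with $m\mapsto m-e_{L+1}+e_L$, which increases $m_L$), and those that leave $\rho_L(m)$ unchanged (namely $m\mapsto m-e_i+e_{i+1}$ for $i\ge L+1$ and $m\mapsto m-e_i+e_{i-1}$ for $i\ge L+2$, i.e.\ transitions entirely among the coordinates $m_{L+1},m_{L+2},\ldots$). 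Since the common unnormalised weights in \eqref{delayed urn modified} and \eqref{up-down transitions explicit} agree on the $\rho_L$-changing moves once we identify the weight $(1-\beta)\mu_L$ of $m\mapsto m-e_{L+1}+e_L$ in $\MLc$ with the weight $(1-\beta)\mu_L$ of $m\mapsto m+e_L$ in $\MLud$, and the weight $\beta L m_L$ of $m\mapsto m-e_L+e_{L+1}$ with the weight $\beta L m_L$ of $m\mapsto m-e_L$ in $\MLud$, the conditional law of the next $\rho_L$-changing move, given that one occurs, is exactly $p_L(\cdot\mid \rho_L(m))$. This is the content of the identity to be proved, modulo the fact that the $\rho_L$-changing weights of $\MLc$ at $m$ depend on $m$ only through $\rho_L(m)$ together with $m_{L+1}$, and $m_{L+1}$ enters only via the $(1-\beta)\mu_L$-weighted move, whose weight does not actually involve $m_{L+1}$ — this is precisely why the construction of $\MLc$ replaces $(1-\beta)(L+1)m_{L+1}$ by $(1-\beta)\mu_L$. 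So I would spell out: conditionally on $\rho_L(\MLc(\sigma_{k-1}))=m$, the embedded chain makes its next $\rho_L$-changing transition to $m'$ with probability proportional to the corresponding weight, and these weights sum to $\beta\lambda+(1-\beta)\mu_L+\|m\|$, giving exactly $p_L(m'\mid m)$.

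Second, I would need the strong Markov property to justify that, at the stopping time $\sigma_{k-1}$, the future evolution of $\MLc$ starts afresh from $\MLc(\sigma_{k-1})$, so that the law of $\rho_L(\MLc(\sigma_k))$ given $\rho_L(\MLc(\sigma_{k-1}))=m$ is the law of the first $\rho_L$-changing transition of a copy of $\MLc$ started from any state $\tilde m$ with $\rho_L(\tilde m)=m$ — and by the previous paragraph this does not depend on the choice of $\tilde m$. Concretely, between $\sigma_{k-1}$ and $\sigma_k$ the chain performs a (possibly empty, a.s.\ finite) excursion among the high coordinates $m_{L+1},m_{L+2},\ldots$, during which $\rho_L$ is frozen; the probability of eventually leaving this "frozen" class by a $\rho_L$-changing move, weighted by the target, is governed only by the ratios of the $\rho_L$-changing weights to the total weight, and by the strong Markov property one can discard the intervening frozen steps. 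I would present this as: decompose each step as "$\rho_L$-changing" vs "$\rho_L$-preserving", note that the $\rho_L$-preserving steps form an internally Markov sub-dynamics on the tail coordinates, and use that at each visit the probability of the next step being $\rho_L$-changing (given the tail configuration) is bounded below — which will also be used for the "infinitely often" claim.

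Third, for "$\sigma_k$ occur infinitely often in $\N$", i.e.\ $\sigma_k<\infty$ for every $k$ a.s., I would argue that the chain cannot get trapped forever in the tail coordinates. The cleanest route is a Lyapunov / drift argument on the tail mass $T:=\sum_{i\ge L+1} i\,m_i$: every $\rho_L$-preserving move either leaves $T$ unchanged or moves mass between tail coordinates, and in fact the only way tail mass is created is via the $\rho_L$-changing move $m\mapsto m-e_L+e_{L+1}$, while tail mass is destroyed at rate proportional to $(1-\beta)\mu_L$ (the move $m-e_{L+1}+e_L$) plus the internal downward moves; hence once $\rho_L$ has been frozen, $T$ is a nonincreasing-in-expectation, bounded, integer-valued quantity that is strictly decreased with probability bounded below whenever positive, so it hits a level from which a $\rho_L$-changing move has positive probability, in finite time a.s. Alternatively — and more simply — note that from any state the probability that the next transition is the $\rho_L$-changing move $m\mapsto m+e_1$ is at least $\beta\lambda/(\beta\lambda+(1-\beta)\mu_L+\|m\|)$, and $\|m\|$ between consecutive $\rho_L$-changing moves can only decrease (tail-internal downward moves) or stay the same (tail-internal upward moves do increase $\|m\|$, so one must be a little careful here), so I would instead bound below the probability of a $\rho_L$-changing move uniformly over an excursion of bounded expected length; I expect this bookkeeping — ruling out an infinite tail excursion — to be the main technical obstacle, since the tail coordinates can in principle carry unboundedly much mass. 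The resolution is that $\mu_L>0$ guarantees the downward move $m-e_{L+1}+e_L$ always has positive weight, and the tail excursion is a transient random walk-like process pushed downward, so it returns to $\{m_{L+1}=0\}$ — hence to a $\rho_L$-changing opportunity — in finite time a.s.; assembling this into a clean recurrence statement is the step I would spend the most care on, after which the identity for $p_L$ follows immediately from the weight comparison above.
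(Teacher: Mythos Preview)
Your treatment of the transition-probability identity is correct and is essentially the paper's argument: both rely on the observation that the unnormalised weights of all $\rho_L$-changing moves from a state $m$ of $\MLc$ depend on $m$ only through $\rho_L(m)$, and sum to $\beta\lambda+(1-\beta)\mu_L+\sum_{i=1}^L i m_i$, so the numerators factor out across the intervening tail steps and the ratio is exactly $p_L(m'\mid m)$.

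The gap is in your argument for $\sigma_k<\infty$ a.s. Your proposed routes---a Lyapunov bound on the tail mass $T=\sum_{i\ge L+1} i m_i$, or transience of the tail excursion ``pushed downward''---fail for $\beta\ge 1/2$. Conditionally on making a $\rho_L$-preserving move, $T\to T+1$ carries weight $\beta T$ while $T\to T-1$ carries weight $(1-\beta)\big(T-(L+1)m_{L+1}\big)$, so for $\beta\ge 1/2$ the tail mass has nonnegative drift and can grow without bound; there is no downward push to exploit. You actually came within a step of the correct argument when you noted the lower bound $\beta\lambda/(\beta\lambda+(1-\beta)\mu_L+|m|)$ and then worried that $|m|$ can increase. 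The paper's resolution is simply that every transition of $\MLc$ changes $|m|$ by exactly $\pm 1$, so after $h-1$ steps $|m|\le |m(\sigma_{k-1})|+h-1$, giving
\[
\P\big(L_h\,\big|\,R_1,\ldots,R_{h-1}\big)\ \ge\ \frac{\eta_L+\sum_{i=1}^L i m_i}{\eta_L+|m(\sigma_{k-1})|+h-1},
\]
whose sum over $h\ge1$ diverges like a harmonic series; hence $\prod_{h\ge1}\big(1-\P(L_h\mid R_1,\ldots,R_{h-1})\big)=0$ and $\sigma_k<\infty$ a.s., uniformly in $\beta\in(0,1]$. This linear-growth bound is the missing idea; no control of the tail dynamics is needed.
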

\begin{proof}
Let $L_i$ (resp.~$R_i$) denote the event that the $i$th transition after $\sigma_{k-1}$ involves components $m_{1},\ldots,m_{L}$ (resp.~$m_{L+1},m_{L+2},\ldots$).
Denote also by $\P_{k-1,m}(\cdot)$ the conditional probability $\P(\cdot|M(\sigma_{k-1})=m)$ and $\E_{k-1,m}(\cdot)$ for the respective conditional expectation.
Given $m\in\Z_{+}^{\infty}$, 
we have
\begin{align*}
  \P_{k-1,m}(&\,\sigma_{k}=\sigma_{k-1}+1)
  =\P_{k-1,m}(L_1)
  =\frac{\eta_L +\sum_{i=1}^{L}im_{i}}
  {\eta_L +\norm{m}-(1-\beta)(L+1)m_{L+1}}
\end{align*}
where $\eta_{L}=\beta\lambda+(1-\beta)\mu_{L}$ and, for $h\geq 2$,
  $$\P_{k-1,m}(\,\sigma_{k}=\sigma_{k-1}+h)=
  \P_{k-1,m}(L_h|R_1,\ldots,R_{h-1})
  \prod_{\ell=1}^{h-1}\P_{k-1,m}(R_\ell|R_1,\ldots,R_{\ell-1}).$$
The denominator of $\P_{k-1,m}(L_h|R_1,\ldots,R_{h-1})$ depends on the intermediate transitions and is thus random. We can factorise
  $$1-\sum_{h=1}^{H}\P_{k-1,m}(\sigma_{k}=\sigma_{k-1}+h)
  =\prod_{h=1}^{H}\left[1-\P_{k-1,m}(L_h|R_1,\ldots,R_{h-1})\right]$$
and the set $\{\sigma_n,n\geq 1\}$ has an infinite number of terms as long as
  $$\sum_{h\geq 1}\P_{k-1,m}(L_h|R_1,\ldots,R_{h-1})\to\infty.$$
The latter holds since 
  $$\P_{k-1,m}(L_h|R_1,\ldots,R_{h-1})\geq 
  \frac{\eta_L+\sum_{i=1}^{L}im_{i}}
  {\eta_L+\norm{m}+h-1}.$$
Since now the transition at step $h$ is the first to involve the first $L$ components, whose configuration has not changed from $\rho_{L}(M(\sigma_{k-1}))$, the numerator of the respective probability can be isolated to write
\begin{align*}
  \P_{k-1,m}(&\,\sigma_{k}=\sigma_{k-1}+h)\\
  =&\,\left(\eta_L+\sum\nolimits_{i=1}^{L}im_{i}\right)
  \E_{|n,m}(T_h^{-1}|R_1,\ldots,R_{h-1})
  \prod_{\ell=1}^{h-1}\P_{k-1,m}(R_\ell|R_1,\ldots,R_{\ell-1})
\end{align*}
where 
  $$T_h
  =\eta_L +\sum_{i\geq 1}iM_{i}(\sigma_{k-1}+h-1)-\beta(L+1)M_{L+1}(\sigma_{k-1}+h-1)$$
denotes the denominator in 
$\P_{k-1,m}(L_{h})$ and $M_{i}(h)$ the random variable for the $i$th component of $M(h)$.
A similar derivation for the transition $m-e_{i}+e_{i+1}$, $i=1,\ldots,L$, occurring at step  $h$ after $\sigma_{k-1}$, leads to writing
\begin{align*}
  \P_{k-1,m}(\rho_L(M&\,(\sigma_{k-1}+h))=\rho_L(m-e_{i}+e_{i+1}),
  \sigma_{k}=\sigma_{k-1}+h)=\\
  =&\,
  \beta im_{i}\
  \E_{k-1,m}(T_h^{-1}|R_1,\ldots,R_{h-1})
  \prod_{\ell=1}^{h-1}\P_{k-1,m}(R_\ell|R_1,\ldots,R_{\ell-1}),
\end{align*}
from which, in turn,
\begin{align*}
    \P\left(\rho_L(M(\sigma_{k}))=\rho_L(m-e_{i}+e_{i+1})
  \mid M(\sigma_{k-1})=m,\sigma_{k}=\sigma_{k-1}+h\right)=\frac{\beta im_{i}}{\eta+\sum_{i=1}^{L}im_{i}}.
\end{align*}
An analogous statement can be derived with a similar argument for all other transitions involving one of the first $L$ components, which, in view of the independence on $h$ of the right hand side, leads to the result. 
\end{proof}


\begin{thebibliography}{99}



\bibitem[Antoniak(1974)]{A74}  {\sc Antoniak, C.E.} (1974). Mixtures of Dirichlet processes with applications to Bayesian nonparametric problems. {\em Ann. Statist.} {\bf 2},  1152--1174.

\bibitem[Arratia, Barbour and Tavar\'e(1992)]{ABT92} {\sc Arratia, R., Barbour, A.D.} and {\sc Tavar\'e, S.} (1992). Poisson process approximations of the Ewens sampling formula. \emph{Ann. Appl. Probab.} \textbf{2}, 519--535.

%


\bibitem[Betz and Ueltschi(2011)]{BU10} {\sc Betz, V.} and {\sc Ueltschi, D.} (2011). Spatial permutations with small cycle weights. {\em Probab. Theory Related Fields.} {\bf 149}, 191--222.

\bibitem[Betz, Ueltschi and Velenik(2011)]{BUV11} {\sc Betz, V., Ueltschi, D.} and {\sc Velenik, Y.} (2011). Random permutations with cycle weights. {\em Ann. Appl. Probab.} {\bf 21},  312--331.


\bibitem[Blackwell and MacQueen(1973)]{BM73} {\sc Blackwell, D.} and {\sc MacQueen, J.B.} (1973). Ferguson distributions via P\'olya urn schemes. {\em Ann. Statist.} {\bf 1},  353--355.
%


\bibitem[Crane(2014)]{C14}  {\sc Crane, H.} (2014). The cut-and-paste process. \emph{Ann. Probab.} \textbf{42}, 1952--1979.

\bibitem[Crane(2016)]{C16}  {\sc Crane, H.} (2016). The ubiquitous Ewens sampling formula. \emph{Statist. Science} \textbf{31}, 1--19.


\bibitem[Escobar and West(1995)]{EW95} {\sc Escobar, M.D.} and {\sc West, M.} (1995). Bayesian density estimation and inference using mixtures. \emph{J. Amer. Stat. Assoc.} \textbf{90}, 577--588.


%
%
%
%
\bibitem[Ewens(1972)]{E72} {\sc Ewens, W.J.} (1972). The sampling theory of selectively nuetral alleles. \emph{Theor. Pop. Biol.} \textbf{3}, 87--112.



\bibitem[Feng(2010)]{F10}  {\sc Feng, S.} (2010). \emph{The Poisson--Dirichlet distribution and related topics}. Springer, Heidelberg.
%

%
\bibitem[Gnedin(2010)]{G10} {\sc Gnedin, A.} (2010). A species sampling model with finitely many types. \emph{Elect. Comm. Probab.} \textbf{15}, 79--88.
%
\bibitem[Gnedin and Pitman(2005)]{GP05} {\sc Gnedin, A. and Pitman, J.} (2005). Exchangeable Gibbs partitions and Stirling triangles. \emph{Zap. Nauchn. Sem. S.-Peterburg. Otdel. Mat. Inst. Steklov. (POMI)} \textbf{325}, 83--102.





\bibitem[Ishwaran and James(2001)]{IJ01}  {\sc Ishwaran, H.} and {\sc James, L.} (2001). Gibbs sampling methods for stick-breaking priors. \emph{J. Amer. Statist. Assoc.} {\bf 96}, 161--173.

\bibitem[Johnson and Kotz(1977)]{JK77}  {\sc Johnson, N.L.} and {\sc Kotz, S.} (1977). \emph{Urn models and their application}. John Wiley.

%
%



\bibitem[Korwar and Hollander(1973)]{KH73}  {\sc Korwar, R.M. and Hollander, M.} (1973). Contribution to the theory of Dirichlet processes. {\em Ann. Probab.} {\bf 1}, 705--711.
%

\bibitem[Lijoi, Mena and Pr\"unster(2005)]{LMP05}  {\sc Lijoi, A., Mena, R.H.} and {\sc Pr\"unster, I.} (2005). Hierarchical mixture modeling with normalized inverse-gaussian priors. {\em J. Amer. Statist. Assoc.} {\bf 472}, 1278--1291.
%
\bibitem[Lijoi, Mena and Pr\"unster(2007)]{LMP07}  {\sc Lijoi, A., Mena, R.H.} and {\sc Pr\"unster, I.} (2007). Controlling the reinforcement in Bayesian non-parametric mixture models. {\em J. R. Statist. Soc. Ser. B} {\bf 69}, 715--740.
%
\bibitem[Mahmoud(2009)]{M09} {\sc Mahmoud, H.M.} (2009). \emph{P\'olya urn models}. Chapman \& Hall/CRC.

\bibitem[MacEachern and M\"uller(1998)]{MM98} {\sc MacEachern, S.N.} and {\sc M\"uller, P.} (1998). Estimating mixture of Dirichlet process models. \emph{J. Comput. Graph. Statist.} \textbf{7}, 223-- 238.


\bibitem[Neal(2000)]{N00} {\sc Neal, R.M.} (2000). Markov chain sampling methods for Dirichlet process mixture models. \emph{J. Comput. Graph. Statist.} \textbf{9}, 249--265.


\bibitem[Petrov(2009)]{P09}  {\sc Petrov, L.} (2009). Two-parameter family of diffusion processes in the Kingman simplex. \emph{Funct. Anal. Appl.} \textbf{43}, 279--296.

\bibitem[Pitman(1995)]{P95}  {\sc Pitman, J.} (1995). Exchangeable and partially exchangeable random partitions. {\em Probab. Theory and Relat. Fields} {\bf 102}, 145--158.
%
\bibitem[Pitman(1996)]{P96}  {\sc Pitman, J.} (1996). Some developments of the Blackwell-MacQueen urn scheme. In \emph{Statistics, Probability and Game Theory} (T.S. Ferguson, L. S. Shapley and J.B. MacQueen, eds.). \emph{IMS Lecture Notes Monogr. Ser.} \textbf{30}, Inst. Math. Statist., Hayward, CA.


\bibitem[Pitman(2006)]{P06}  {\sc Pitman, J.} (2006). \emph{Combinatorial Stochastic Processes}. IMS Lecture Notes in Mathematics \textbf{1875}. Springer, Berlin.


%
%
%
%





\end{thebibliography}
\end{document}